\newtheorem{theorem}{Theorem}
\newtheorem{lemma}{Lemma}
\newtheorem{definition}{Definition}
  \newlength{\titleright}
\begin{document}
\begin{center}
{\large\bf On existence of solution to nonlinear $\psi-$Hilfer Cauchy-type problem}

\vskip.20in

Mohammed S Abdo$^{1}$\, S K Panchal$^{2}$ and Sandeep P Bhairat$^{3}$\footnote{author for correspondence: sp.bhairat@marj.ictmumbai.edu.in}\\[2mm]
{\footnotesize
$^{1}$Department of Mathematics, Hodeidah University, Al-Hodeidah-3114, Yemen.\\
$^{2}$Department of Mathematics, Dr Babasaheb Ambedkar Marathwada University, Aurangabad, (M.S.) India.\\
$^{3}$Institute of Chemical Technology Mumbai, Marathwada Campus, Jalna - 431 203 (M.S.) India.\\
[5pt]
}
\end{center}

{\vspace*{0.5pc} \hrule\hrule\vspace*{2pc}}
\hspace{-0.8cm}{\bf Abstract}\\
The aim of this paper is to obtain the existence of unique solution to nonlinear Cauchy-type problem. We consider the implicit nonlinear Cauchy-type problem with $\psi-$Hilfer fractional derivative. The Banach fixed point theorem is used to obtain the existence of a unique solution whereas the generalized Gronwall inequality is used to discuss continuous data dependence of the solution. The results obtained herein are supported with illustrative example.

\hspace{-0.8cm}{\it \footnotesize {\bf Keywords:}} {\small $\psi$-Hilfer derivative, Cauchy-type problem, existence of solution, fixed point theory, generalized Gronwall inequality.}\\
{\it \footnotesize {\bf Mathematics Subject Classification}}:{\small 26A33; 26D10; 34A08; 40A30}.\\
\thispagestyle{empty}
\section{Introduction}
Fractional Calculus (FC) has glorious history of three decades and has been developing itself in almost all branches of science and engineering.  It has been emerged and spread wings as a new field of applied mathematics research in twenty first century due to applicability in many real world applications, for instance see \cite{HI,FM,RL,SHG,ce1,ce2}. Since the beginning of FC, many fractional differential and integral operators defined and used by timely mathematicians for serving their own purposes.  During the theoretical development of this arbitrary order calculus, Grunwald-Letnikov, Wyel, Riesz, Liouville-Caputo, Riemann-Liouville, Hadamard, Hilfer became more famous in physics, mechanics, material science, signal and image processing, chemical, biological and electrical engineering, economics and mathematical modelling to name few. For details on theory and application of FC, see \cite{TM,FGG,GK,HI,FM,KL1,RL,SHG,ce1,ce2} and references therein.

In 2006, Kilbas et al \cite{KL1} introduced the concept of fractional differentiation of a function with respect to another function in the Riemann-Liouville sense. They further defined suitable weighted spaces and studied some of its properties by using corresponding fractional integral. Using this idea for Caputo fractional derivative, Almaida \cite{AR1} defined fractional derivative of a function with respect to anther function called $\psi$-Caputo derivative. Many researchers used this $\psi$-Caputo fractional derivative and studied some of the qualitative properties of fractional differential equations (FDEs). Recently in 2017, Sousa and Oliviera \cite{VO} proposed interpolator of $\psi$-Riemann-Liouville and $\psi$-Caputo fractional derivatives in Hilfer’s \cite{HI} sense of definition, and named $\psi$-Hilfer fractional derivative. They generalized the Gronwall inequality and discussed the data dependence of Cauchy-type problem in suitable weighted space \cite{JO19}, also see \cite{AP,JK,KAV,JO18}.

Recently, Kuchhe et al \cite{KAV} studied the existence, uniqueness and continuous dependence of solution to Cauchy-type problem using Weissinger fixed point theorem and Picard approximation technique. They further studied linear Cauchy-type problem for $\psi$-Hilfer differential equations with constant as well as variable coefficients in suitable waited space of functions. For some recent works on qualitative properties of Riemann-Liouville, Liouville-Caputo, Hilfer and $\psi$-Hilfer FDEs, see \cite{ABGH,BSP1,MS8,DR,D1,DB,DB4,FK,JK,KL1,KD1,KD2,KAV,DKZ,JN15,SK}.

Motivated by above contributions and the works \cite{KAV,DKZ}, in this paper, we consider the following implicit nonlinear Cauchy-type problem:
\begin{equation}
D_{a^{+}}^{\alpha ,\beta ;\psi }u(t)=f(t,u(t),D_{a^{+}}^{\alpha ,\beta ;\psi
}u(t)),\text{ \ }0<\alpha <1,0\leq \beta \leq 1,t>a,  \label{11}
\end{equation}%
\begin{equation}
I_{a^{+}}^{1-\gamma ;\psi }u(a)=u_{a},\qquad  \qquad \ u_a\in\mathbb{R},\quad\gamma=\alpha +\beta -\alpha \beta ,  \label{12}
\end{equation}%
where $D_{a^{+}}^{\alpha ,\beta ;\psi }$ is the $\psi $-Hilfer fractional derivative \cite{JO18}, $I_{a^{+}}^{1-\gamma ;\psi }$\ is $\psi-$Riemann-Liouville fractional integral, $f:(a,b]\times \mathbb{R}\times \mathbb{R}%
\longrightarrow \mathbb{R}$ is given function
satisfying some assumptions that will be specified in Section \ref{kk} and $%
u_{a}$ is a constant. In this paper, we use Banach fixed point theorem to prove the existence, uniqueness results and
generalized Gronwall inequality to discuss the continuous data dependence of Cauchy-type problem \eqref{11}-\eqref{12}.

The rest of the paper is outlined as follows: In Section \ref{jj}, we list some basic definitions, preliminary facts and lemmas useful
in the subsequent sections. In section \ref{kk}, we prove the equivalence of the $\psi-$Hilfer Cauchy-type problem with Volterra
integral equation. Further, we prove the existence of a uniqueness solution to Cauchy-type problem \ref{11})--(\ref{12}. The continuous dependence on order of differentiation and initial data of considered problem will be discussed in Section \ref{hh}. Finally, an example illustrating our main results will be provided in last section.

\section{Preliminaries}\label{jj}
In this section, we list some notations, basic definitions and preliminary facts which can be found in \cite{KL1,JO18}. Let $[a,b]\subset{\mathbb{R}^+},(0<a<b<\infty),$
and $C[a,b],$ $AC^{n}[a,b],C^{n}[a,b]$ be the spaces of all continuous real functions, $n$-times absolutely continuous functions, $n$-times
continuously differentiable functions on $[a,b]$, respectively. Let $L^{p}(a,b),$ $(1\leq p<\infty )$ be a space of Lebesgue measurable functions defined on $(a,b)$.
Let us define the following norms
\begin{align*}
 \left\Vert h\right\Vert _{L^{p}(a,b)}&=\left[ \int_{a}^{b}\left\vert h(t)\right\vert ^{p}dt\right] ^{\frac{1}{p}}<\infty, \text{ for any } h\in L^{p}(a,b).\\
\left\Vert h\right\Vert _{C[a,b]}&=\max \{\left\vert h(t)\right\vert :t\in \lbrack [a,b]\}, \text{ for any } h\in C[a,b],\\
\text{ and }  &AC^{n}[a,b]=\{h:[a,b]\rightarrow\mathbb{R} | h^{(n-1)}\in AC[a,b]\}.
\end{align*}
We need the following weighted spaces of continuous functions:
\begin{equation*}
C_{\gamma ;\psi }[a,b]=\{h:(a,b]\rightarrow
\mathbb{R}
:(\psi (t)-\psi (a))^{\gamma }h(t)\in C[a,b]\},\text{ \ }0\leq \gamma <1,
\end{equation*}%
\begin{equation*}
C_{\gamma ;\psi }^{n}[a,b]=\{h:(a,b]\rightarrow
\mathbb{R}
:h(t)\in C^{n-1}[a,b];h^{(n)}(t)\in C_{\gamma ;\psi }[a,b]\},\text{ \ }0\leq
\gamma <1,\text{ }n\in
\mathbb{N}
\end{equation*}%
and
\begin{equation*}
C_{\gamma ;\psi }^{\alpha ,\beta }[a,b]=\{h\in C_{\gamma ;\psi }[a,b]:\text{
}D_{a^{+}}^{\alpha ,\beta ;\psi }h\in C_{\gamma ;\psi }[a,b]\},\text{ \ }%
\gamma =\alpha +\beta -\alpha \beta .
\end{equation*}%
In particular, if $n=0$, we have%
\begin{equation*}
C_{\gamma ;\psi }^{0}[a,b]=C_{\gamma ;\psi }[a,b]
\end{equation*}%
with the norms%
\begin{equation*}
\left\Vert h\right\Vert _{C_{\gamma ;\psi }[a,b]}=\left\Vert (\psi (t)-\psi
(a))^{\gamma }h(t)\right\Vert _{C[a,b]}=\max \{\left\vert (\psi (t)-\psi
(a))^{\gamma }h(t)\right\vert :t\in \lbrack a,b]\},
\end{equation*}%
and%
\begin{equation*}
\left\Vert h\right\Vert _{C_{\gamma ;\psi
}^{n}[a,b]}=\sum_{k=0}^{n-1}\left\Vert h^{(k)}\right\Vert
_{C[a,b]}+\left\Vert h^{(n)}\right\Vert _{C_{\gamma ;\psi }[a,b]}.
\end{equation*}
\begin{definition} The familiar Mittag–Leffler functions $E_{\nu}(z)$ and $E_{\nu,\mu}(z)$ are defined by the series: 
\begin{equation}\label{ml}
E_{\nu}(z):=\sum_{k=0}^{\infty}\frac{z^k}{\Gamma(\nu{k}+1)}=:E_{\nu,1}(z) \text{ and }  E_{\nu,\mu}(z):=\sum_{k=0}^{\infty}\frac{z^k}{\Gamma(\nu{k}+\mu)},
\end{equation}
where $\nu,\mu\in\mathbb{C}, Re(\nu)>0$ and $\Gamma(\zeta), \zeta>0,$ is Euler gamma function given by
\begin{equation}\label{gamma}
\Gamma(\zeta)=\int_{0}^{\infty}e^{-t}t^{\zeta-1}dt.
\end{equation}
\end{definition}
\begin{definition}
\label{r} \cite{KL1} The left-sided $\psi $-Riemann-Liouville
fractional integral and fractional derivative of order $\alpha $ $%
(n-1<\alpha <n)$ for an integrable function $h:[a,b]\rightarrow
\mathbb{R}
$ with respect to another function $\psi :[a,b]\rightarrow
\mathbb{R},
$ that is an increasing differentiable function such that $\psi ^{\prime
}(t)\neq 0$, for all $t\in \lbrack a,b],$ $(-\infty \leq a<b\leq +\infty),$
are respectively defined as follows:%
\begin{equation}
I_{a^{+}}^{\alpha ;\psi }h(t)=\frac{1}{\Gamma (\alpha )}\int_{a}^{t}\psi
^{\prime }(s)(\psi (t)-\psi (s))^{\alpha -1}h(s)ds  \label{a2}
\end{equation}%
and%
\begin{eqnarray}
D_{a^{+}}^{\alpha ;\psi }h(t) &=&\left( \frac{1}{\psi ^{\prime }(t)}\frac{d}{%
dt}\right) ^{n}\text{ }I_{a^{+}}^{n-\alpha ;\psi }h(t)  \notag \\
&=&\frac{1}{\Gamma (n-\alpha )}\left( \frac{1}{\psi ^{\prime }(t)}\frac{d}{dt%
}\right) ^{n}\int_{a}^{t}\psi ^{\prime }(s)(\psi (t)-\psi (s))^{n-\alpha
-1}h(s)ds.  \label{a3}
\end{eqnarray}
\end{definition}

\begin{definition}
\cite{AR1} The left-sided $\psi $-Caputo fractional derivative of function
$h\in C^{n}[a,b],$ $(n-1<\alpha <n),$ $n=[\alpha]+1$ with respect to another
function $\psi $ is defined by%
\begin{eqnarray*}
\text{ }^{c}D_{a^{+}}^{\alpha ;\psi }h(t) &=&\text{ }I_{a^{+}}^{n-\alpha
;\psi }\left( \frac{1}{\psi ^{\prime }(t)}\frac{d}{dt}\right) ^{n}h(t) \\
&=&\frac{1}{\Gamma (n-\alpha )}\int_{a}^{t}\psi ^{\prime }(s)(\psi (t)-\psi
(s))^{n-\alpha -1}h_{\psi }^{[n]}(s)ds,
\end{eqnarray*}%
where $h_{\psi }^{[n]}(t)=\left( \frac{1}{\psi ^{\prime }(t)}\frac{d}{dt}%
\right) ^{n}h(t)$ and $\psi $ defined as in Definition \ref{r}.
\end{definition}
Moreover, the $\psi -$Caputo fractional derivative of function $h\in
AC^{n}[a,b]$ is determined as%
\begin{equation*}
^{c}D_{a^{+}}^{\alpha ;\psi }h(t)=D_{a^{+}}^{\alpha ;\psi }\left[
h(t)-\sum_{k=0}^{n-1}\frac{\left[ \frac{1}{\psi ^{\prime }(t)}\frac{d}{dt}%
\right] ^{k}h(a)}{k!}(\psi (t)-\psi (a))^{k}\right] .
\end{equation*}%
\begin{definition}
\label{M1}\cite{JO18} Let $n-1<\alpha <n, \,\, n\in
\mathbb{N}
$, with $[a,b],$ $-\infty \leq a<b\leq +\infty, $ and $%
\psi \in C^{n}([a,b],%
\mathbb{R}
)$ a function such that $\psi (t)$ is increasing and $\psi ^{\prime }(t)\neq
0$, for all $t\in \lbrack a,b].$ The $\psi $-Hilfer fractional derivative
(left-sided) of function $h\in C^{n}([a,b],%
\mathbb{R}
)$ of order $\alpha $ and type $\beta \in [0,1]$ is determined as
\begin{equation*}
D_{a^{+}}^{\alpha ,\beta ;\psi }h(t)=I_{a^{+}}^{\beta (n-\alpha );\psi }
\left[ \frac{1}{\psi ^{\prime }(t)}\frac{d}{dt}\right] ^{n}I_{a^{+}}^{(1-%
\beta )(n-\alpha );\psi }h(t),\text{ }t>a.
\end{equation*}
In other way%
\begin{equation}
D_{a^{+}}^{\alpha ,\beta ;\psi }h(t)=I_{a^{+}}^{\beta (n-\alpha );\psi
}D_{a^{+}}^{\gamma ;\psi }h(t),\text{ }t>a,  \label{z1}
\end{equation}%
where%
\begin{equation*}
D_{a^{+}}^{\gamma ;\psi }h(t)=\left[ \frac{1}{\psi ^{\prime }(t)}\frac{d}{dt}%
\right] ^{n}I_{a^{+}}^{(1-\beta )(n-\alpha );\psi }h(t).
\end{equation*}
In particular, the $\psi $-Hilfer fractional derivative of order $\alpha(0,1)$ and type $\alpha\in[0,1]$, can be written in the following form%
\begin{eqnarray}
D_{a^{+}}^{\alpha ,\beta ;\psi }h(t) &=&\frac{1}{\Gamma (\gamma -\alpha )}%
\int_{a}^{t}(\psi (t)-\psi (s))^{\gamma -\alpha -1}D_{a^{+}}^{\gamma ;\psi
}h(s)ds  \notag \\
&=&I_{a^{+}}^{\gamma -\alpha ;\psi }D_{a^{+}}^{\gamma ;\psi }h(t),
\label{y1}
\end{eqnarray}%
where $\gamma =\alpha +\beta -\alpha \beta $, and $I_{a^{+}}^{\gamma -\alpha
;\psi }(\cdot )$ is defined by \eqref{a2} and $D_{a^{+}}^{\gamma ;\psi
}h(t)=\left[ \frac{1}{\psi ^{\prime }(t)}\frac{d}{dt}\right]
I_{a^{+}}^{1-\gamma ;\psi }h(t).$
\end{definition}

\begin{lemma}
\label{a4} \cite{FK} Let $\alpha >0,$ $0\leq \gamma <1$ and $h\in
L^{1}(a,b)$. Then%
\begin{equation*}
I_{a^{+}}^{\alpha ;\psi }I_{a^{+}}^{\beta ;\psi }h(t)=I_{a^{+}}^{\alpha
+\beta ;\psi }h(t),\text{ }a.e.\text{ }t\in \lbrack a,b].
\end{equation*}%
In particular,
\begin{description}
\item[(i)] if $h\in C_{\gamma ;\psi }[a,b]$. Then $I_{a^{+}}^{\alpha ;\psi
}I_{a^{+}}^{\beta ;\psi }h(t)=I_{a^{+}}^{\alpha +\beta ;\psi }h(t),$ $t\in
(a,b].$

\item[(ii)] If $h\in C[a,b].$ Then $I_{a^{+}}^{\alpha ;\psi }I_{a^{+}}^{\beta
;\psi }h(t)=I_{a^{+}}^{\alpha +\beta ;\psi }h(t),$ $t\in \lbrack a,b].$
\end{description}
\end{lemma}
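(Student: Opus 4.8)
The plan is to establish the general $L^{1}$ identity by a direct computation with an interchange of integrals, and then to upgrade the ``almost everywhere'' conclusion to pointwise equality in the two special cases using continuity of the $\psi$-Riemann--Liouville integral on the weighted spaces. Concretely, writing out both operators via \eqref{a2} gives
\[
I_{a^{+}}^{\alpha ;\psi }I_{a^{+}}^{\beta ;\psi }h(t)=\frac{1}{\Gamma(\alpha)\Gamma(\beta)}\int_{a}^{t}\!\!\int_{a}^{s}\psi'(s)\psi'(\tau)(\psi(t)-\psi(s))^{\alpha-1}(\psi(s)-\psi(\tau))^{\beta-1}h(\tau)\,d\tau\,ds.
\]
Since $\psi$ is increasing and $C^{1}$, the substitution $x=\psi(t)$, $\sigma=\psi(s)$, $\xi=\psi(\tau)$ turns this into the classical ($\psi=\mathrm{id}$) double integral, so both the Fubini--Tonelli justification and the identity itself reduce to the known Riemann--Liouville semigroup law; alternatively one works directly in the $\psi$-variables. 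After swapping the order of integration the inner integral becomes
\[
\int_{\tau}^{t}\psi'(s)(\psi(t)-\psi(s))^{\alpha-1}(\psi(s)-\psi(\tau))^{\beta-1}\,ds=(\psi(t)-\psi(\tau))^{\alpha+\beta-1}\,\frac{\Gamma(\alpha)\Gamma(\beta)}{\Gamma(\alpha+\beta)},
\]
obtained via the substitution $w=(\psi(s)-\psi(\tau))/(\psi(t)-\psi(\tau))$ and the Beta integral $\int_0^1 w^{\beta-1}(1-w)^{\alpha-1}\,dw=B(\beta,\alpha)$; substituting back and cancelling the Gamma factors leaves exactly $I_{a^{+}}^{\alpha+\beta;\psi}h(t)$ for a.e. $t\in[a,b]$.

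For part (i), $h\in C_{\gamma;\psi}[a,b]$ with $0\le\gamma<1$ implies $h\in L^{1}(a,b)$, so the a.e. identity applies; moreover both $I_{a^{+}}^{\alpha+\beta;\psi}h$ and $I_{a^{+}}^{\alpha;\psi}I_{a^{+}}^{\beta;\psi}h$ are continuous on $(a,b]$, since the $\psi$-Riemann--Liouville integral of a $C_{\gamma;\psi}$-function is continuous there, and two continuous functions that agree a.e. agree everywhere on $(a,b]$. Part (ii) is identical, with $C[a,b]$ replacing $C_{\gamma;\psi}[a,b]$ and the continuity (and vanishing at $t=a$) of $I_{a^{+}}^{\alpha;\psi}$ on $C[a,b]$ yielding equality on the whole closed interval.

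The main obstacle is the rigorous justification of the interchange of integrals --- checking absolute integrability of the kernel near the diagonal $s=\tau$ and near $s=a$ --- together with, in the special cases, the supporting facts that $C_{\gamma;\psi}[a,b]\subset L^{1}(a,b)$ and that $I_{a^{+}}^{\alpha;\psi}$ preserves the appropriate continuity; once these are secured, the computation is routine.
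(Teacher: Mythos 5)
Your proposal is correct: the Fubini--Tonelli interchange followed by the substitution $w=(\psi(s)-\psi(\tau))/(\psi(t)-\psi(\tau))$ and the Beta integral is the standard proof of the semigroup law, and the upgrade from a.e.\ to pointwise equality in cases (i) and (ii) via the mapping properties of $I_{a^{+}}^{\alpha;\psi}$ on $C_{\gamma;\psi}[a,b]$ and $C[a,b]$ (Lemma \ref{A4}) is sound, using that $C_{\gamma;\psi}[a,b]\subset L^{1}(a,b)$ because $\psi(t)-\psi(a)\gtrsim t-a$ and $\gamma<1$. The paper itself gives no proof of this lemma --- it is quoted from the cited reference --- and your argument is essentially the one found there, so there is nothing to reconcile.
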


\begin{lemma}\cite{JO18}
\label{a5} Let $\alpha >0,$ $0\leq \beta \leq 1$\ and $0\leq \gamma <1.$ If $%
h\in C_{\gamma ;\psi }[a,b]$ then%
\begin{equation*}
D_{a^{+}}^{\alpha ,\beta ;\psi }I_{a^{+}}^{\alpha ;\psi }h(t)=h(t),\text{ }%
t\in (a,b].
\end{equation*}
If $h\in C^{1}[a,b]$ then%
\begin{equation*}
D_{a^{+}}^{\alpha ,\beta ;\psi }I_{a^{+}}^{\alpha ;\psi }h(t)=h(t),\text{ }%
t\in \lbrack a,b].
\end{equation*}
\end{lemma}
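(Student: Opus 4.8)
The plan is to collapse the composite $D_{a^{+}}^{\alpha,\beta;\psi}I_{a^{+}}^{\alpha;\psi}$ onto a single $\psi$-Riemann--Liouville pair $I^{\sigma}D^{\sigma}$ and then to invoke the ``left inverse up to an initial-value remainder'' identity for that pair; the entire substance of the lemma is that this remainder vanishes under the stated hypotheses. I would put $\sigma:=\beta(1-\alpha)=\gamma-\alpha\in[0,1)$ and dispose of the trivial case $\beta=0$ (hence $\sigma=0$) first: there $D_{a^{+}}^{\alpha,0;\psi}I_{a^{+}}^{\alpha;\psi}h=\bigl[\tfrac{1}{\psi'}\tfrac{d}{dt}\bigr]I_{a^{+}}^{1;\psi}h=h$ by the fundamental theorem of calculus. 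From now on assume $0<\sigma<1$.

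First I would use the representation \eqref{y1} of the $\psi$-Hilfer derivative (with $n=1$) to write
\[
D_{a^{+}}^{\alpha,\beta;\psi}I_{a^{+}}^{\alpha;\psi}h
=I_{a^{+}}^{\gamma-\alpha;\psi}\Bigl[\tfrac{1}{\psi'(t)}\tfrac{d}{dt}\Bigr]I_{a^{+}}^{1-\gamma;\psi}\,I_{a^{+}}^{\alpha;\psi}h ,
\]
and then merge the two innermost $\psi$-integrals via the semigroup law of Lemma \ref{a4} --- part (i) when $h\in C_{\gamma;\psi}[a,b]$ (which gives the identity on $(a,b]$) and part (ii) when $h\in C^{1}[a,b]\subset C[a,b]$ (on $[a,b]$) --- obtaining $I_{a^{+}}^{1-\gamma;\psi}I_{a^{+}}^{\alpha;\psi}h=I_{a^{+}}^{1-\gamma+\alpha;\psi}h=I_{a^{+}}^{1-\sigma;\psi}h$. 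Since $\bigl[\tfrac{1}{\psi'}\tfrac{d}{dt}\bigr]I_{a^{+}}^{1-\sigma;\psi}=D_{a^{+}}^{\sigma;\psi}$ (the $\psi$-Riemann--Liouville derivative of order $\sigma$, Definition \ref{r}) and $\gamma-\alpha=\sigma$, this reduces the claim to showing $I_{a^{+}}^{\sigma;\psi}D_{a^{+}}^{\sigma;\psi}h(t)=h(t)$.

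To close the argument I would use the classical composition rule for $\psi$-Riemann--Liouville operators (the $\psi$-version of $I^{\sigma}D^{\sigma}$, obtained from \eqref{a2}--\eqref{a3} by a Dirichlet/Fubini interchange, cf.\ \cite{KL1,JO18}):
\[
I_{a^{+}}^{\sigma;\psi}D_{a^{+}}^{\sigma;\psi}h(t)=h(t)-\frac{(\psi(t)-\psi(a))^{\sigma-1}}{\Gamma(\sigma)}\bigl(I_{a^{+}}^{1-\sigma;\psi}h\bigr)(a^{+}),\qquad t\in(a,b],
\]
so everything reduces to verifying $\bigl(I_{a^{+}}^{1-\sigma;\psi}h\bigr)(a^{+})=0$. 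If $h\in C^{1}[a,b]$ this is immediate since $h$ is bounded and $\bigl|I_{a^{+}}^{1-\sigma;\psi}h(t)\bigr|\le\|h\|_{C[a,b]}(\psi(t)-\psi(a))^{1-\sigma}/\Gamma(2-\sigma)\to0$; then the right-hand side above is just $h(t)$, which is continuous up to $t=a$, and this is what promotes the identity to the closed interval $[a,b]$. If $h\in C_{\gamma;\psi}[a,b]$, I would insert $|h(s)|\le\|h\|_{C_{\gamma;\psi}[a,b]}(\psi(s)-\psi(a))^{-\gamma}$ into \eqref{a2} and substitute $u=(\psi(s)-\psi(a))/(\psi(t)-\psi(a))$ to get $\bigl|I_{a^{+}}^{1-\sigma;\psi}h(t)\bigr|\le C\,B(1-\gamma,1-\sigma)\,(\psi(t)-\psi(a))^{1-\sigma-\gamma}\to0$ as $t\to a^{+}$, the Beta integral being finite because $\gamma<1$ and $\sigma<1$.

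I expect the only genuine difficulty to be this last step. The reduction in the first two displays is routine bookkeeping with Definition \ref{M1}/\eqref{y1} and the semigroup property; by contrast, the conclusion hinges entirely on the near-$a$ decay of $I_{a^{+}}^{1-\sigma;\psi}h$, i.e.\ on how singular $h$ is allowed to be at the left endpoint. That is precisely where the weighted continuity $h\in C_{\gamma;\psi}[a,b]$ is used for the version on $(a,b]$, and where the extra smoothness $h\in C^{1}[a,b]$ is what buys the conclusion on all of $[a,b]$.
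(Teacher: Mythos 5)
The paper never proves Lemma \ref{a5}; it is imported verbatim from \cite{JO18}, so there is no in-paper argument to measure you against. On its own terms, your reduction of $D_{a^{+}}^{\alpha ,\beta ;\psi }I_{a^{+}}^{\alpha ;\psi }h$ to $I_{a^{+}}^{\sigma ;\psi }D_{a^{+}}^{\sigma ;\psi }h$ with $\sigma =\beta (1-\alpha )$ is correct (for $0<\alpha <1$; the lemma nominally allows any $\alpha >0$, which you silently drop), and the $C^{1}[a,b]$ branch is sound.

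The weighted branch has a genuine gap. First, a conflation: the $\gamma $ in the hypothesis $h\in C_{\gamma ;\psi }[a,b]$ is a free weight in $[0,1)$, not the Hilfer parameter $\alpha +\beta -\alpha \beta $, so the identification $\sigma =\gamma -\alpha $ is illegitimate; you must keep $\sigma =\beta (1-\alpha )$ and the weight $\gamma $ independent. More seriously, your final estimate gives $\vert I_{a^{+}}^{1-\sigma ;\psi }h(t)\vert \leq C\,B(1-\gamma ,1-\sigma )\,(\psi (t)-\psi (a))^{1-\sigma -\gamma }$ and you conclude this tends to $0$ ``because $\gamma <1$ and $\sigma <1$.'' The Beta factor is indeed finite, but the exponent $1-\sigma -\gamma $ is positive only when $\gamma <1-\beta (1-\alpha )$ --- exactly the hypothesis $\alpha >\gamma $ of the paper's Lemma \ref{z2} applied with order $1-\sigma $. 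When $\gamma +\sigma \geq 1$ (under your own identification $\gamma =\alpha +\beta -\alpha \beta $ the exponent equals $(1-\alpha )(1-2\beta )$, which is $\leq 0$ for every $\beta \geq \tfrac{1}{2}$), the quantity $I_{a^{+}}^{1-\sigma ;\psi }h$ need not vanish at $a^{+}$ and can blow up; in that regime the composition identity $I^{\sigma }D^{\sigma }h=h-\frac{(\psi (t)-\psi (a))^{\sigma -1}}{\Gamma (\sigma )}\bigl(I_{a^{+}}^{1-\sigma ;\psi }h\bigr)(a^{+})$ you invoke is itself unavailable, since it presupposes $I_{a^{+}}^{1-\sigma ;\psi }h$ absolutely continuous with a finite limit at $a$ (and for $h=(\psi (t)-\psi (a))^{-\gamma }$ with $\gamma +\sigma >1$ even the defining integral of $I^{\sigma }$ applied to $D^{\sigma }h$ diverges). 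So your argument establishes the weighted statement only under the extra restriction $\gamma +\beta (1-\alpha )<1$; covering the full stated range $0\leq \gamma <1$ requires a different device (for instance, verifying the identity directly on the powers $(\psi (t)-\psi (a))^{\delta -1}$ via Lemma \ref{r2} before passing to general $h$), and you should flag explicitly that in the complementary regime the operators are no longer given by absolutely convergent integrals.
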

\begin{lemma}\cite{AP}
\label{r3} Let $0<\alpha <1,$ $0\leq \beta \leq 1$ and $\gamma =\alpha
+\beta -\alpha \beta .$ If $h(t)\in C_{1-\gamma ;\psi }^{\gamma }[a,b]$ then%
\begin{equation}
I_{a^{+}}^{\gamma ;\psi }D_{a^{+}}^{\gamma ;\psi }h(t)=I_{a^{+}}^{\alpha
;\psi }\text{ }D_{a^{+}}^{\alpha ,\beta ;\psi }h(t)  \label{m1}
\end{equation}%
and
\begin{equation}
D_{a^{+}}^{\gamma ;\psi }I_{a^{+}}^{\alpha ;\psi }h(t)=D_{a^{+}}^{\beta
(1-\alpha );\psi }h(t).  \label{m2}
\end{equation}
\end{lemma}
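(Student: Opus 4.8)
The plan is to obtain both \eqref{m1} and \eqref{m2} directly from the explicit definitions of the $\psi$-Hilfer and $\psi$-Riemann--Liouville derivatives, combined with the composition (``semigroup'') law for $\psi$-Riemann--Liouville integrals stated in Lemma \ref{a4}. The only algebra needed is the bookkeeping of exponents valid when $0<\alpha<1$, $0\le\beta\le1$, $\gamma=\alpha+\beta-\alpha\beta$ and $n=1$: one has $\gamma-\alpha=\beta(1-\alpha)$, $(1-\beta)(1-\alpha)=1-\gamma$, and $1-\gamma+\alpha=1-\beta(1-\alpha)$, with $\gamma\in(0,1]$ and $\beta(1-\alpha)\in[0,1)$.

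First I would prove \eqref{m1}. Since $n=1$, the relation \eqref{y1} gives $D_{a^{+}}^{\alpha,\beta;\psi}h(t)=I_{a^{+}}^{\gamma-\alpha;\psi}D_{a^{+}}^{\gamma;\psi}h(t)$. Applying $I_{a^{+}}^{\alpha;\psi}$ to both sides and using Lemma \ref{a4} to merge $I_{a^{+}}^{\alpha;\psi}I_{a^{+}}^{\gamma-\alpha;\psi}=I_{a^{+}}^{\gamma;\psi}$ yields $I_{a^{+}}^{\alpha;\psi}D_{a^{+}}^{\alpha,\beta;\psi}h(t)=I_{a^{+}}^{\gamma;\psi}D_{a^{+}}^{\gamma;\psi}h(t)$, which is \eqref{m1}. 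The composition is legitimate because the hypothesis $h\in C_{1-\gamma;\psi}^{\gamma}[a,b]$ forces $D_{a^{+}}^{\gamma;\psi}h\in C_{1-\gamma;\psi}[a,b]$, so part (i) of Lemma \ref{a4} applies and the identity holds for every $t\in(a,b]$; the degenerate case $\beta=0$ (where $\gamma=\alpha$ and $I_{a^{+}}^{0;\psi}$ is the identity) is immediate.

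Next I would prove \eqref{m2}. Unfolding the definition of the $\psi$-Riemann--Liouville derivative of order $\gamma\in(0,1]$, namely $D_{a^{+}}^{\gamma;\psi}g(t)=\left[\frac{1}{\psi'(t)}\frac{d}{dt}\right]I_{a^{+}}^{1-\gamma;\psi}g(t)$, one gets $D_{a^{+}}^{\gamma;\psi}I_{a^{+}}^{\alpha;\psi}h(t)=\left[\frac{1}{\psi'(t)}\frac{d}{dt}\right]I_{a^{+}}^{1-\gamma;\psi}I_{a^{+}}^{\alpha;\psi}h(t)$. Because $h\in C_{1-\gamma;\psi}[a,b]$, Lemma \ref{a4}(i) gives $I_{a^{+}}^{1-\gamma;\psi}I_{a^{+}}^{\alpha;\psi}h(t)=I_{a^{+}}^{1-\gamma+\alpha;\psi}h(t)$, and since $1-\gamma+\alpha=1-\beta(1-\alpha)$ the right-hand side is $\left[\frac{1}{\psi'(t)}\frac{d}{dt}\right]I_{a^{+}}^{1-\beta(1-\alpha);\psi}h(t)=D_{a^{+}}^{\beta(1-\alpha);\psi}h(t)$, the last equality being just the definition of the $\psi$-Riemann--Liouville derivative of order $\beta(1-\alpha)\in[0,1)$. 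This is \eqref{m2}.

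The main thing to be careful about is not the computation itself but the membership conditions that license each step: one must confirm that $h$ and $D_{a^{+}}^{\gamma;\psi}h$ lie in $C_{1-\gamma;\psi}[a,b]$ (hence in $L^{1}(a,b)$, a weight exponent $1-\gamma<1$ producing only an integrable singularity at $t=a$), which is exactly what membership in $C_{1-\gamma;\psi}^{\gamma}[a,b]$ encodes, and that $I_{a^{+}}^{\alpha;\psi}h$ remains in the domain of $D_{a^{+}}^{\gamma;\psi}$ so that the differentiation in \eqref{m2} is meaningful. I also expect to spend a line or two on the endpoint values $\beta=0$ (then $\beta(1-\alpha)=0$ and the relevant operators collapse to identities) and $\beta=1$ (then $\gamma=1$, $I_{a^{+}}^{1-\gamma;\psi}$ is the identity, and $D_{a^{+}}^{\gamma;\psi}$ is the classical $\psi$-derivative), checking that the formulas remain valid there. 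Beyond this, no real obstacle is anticipated.
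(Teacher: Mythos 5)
Your argument is correct: the paper itself gives no proof of Lemma \ref{r3} (it is quoted from \cite{AP}), and your derivation — rewriting $D_{a^{+}}^{\alpha,\beta;\psi}h=I_{a^{+}}^{\gamma-\alpha;\psi}D_{a^{+}}^{\gamma;\psi}h$ via \eqref{y1} and merging integrals by the semigroup law for \eqref{m1}, then unfolding $D_{a^{+}}^{\gamma;\psi}=\bigl[\tfrac{1}{\psi'(t)}\tfrac{d}{dt}\bigr]I_{a^{+}}^{1-\gamma;\psi}$ and using $1-\gamma+\alpha=1-\beta(1-\alpha)$ for \eqref{m2} — is exactly the standard proof given in the cited source. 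The exponent identities and the membership checks ($D_{a^{+}}^{\gamma;\psi}h\in C_{1-\gamma;\psi}[a,b]$ licensing Lemma \ref{a4}(i), plus the degenerate endpoints $\beta=0,1$) are all handled correctly, so nothing is missing.
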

\begin{lemma}\cite{KL1}
\label{r2} Let $t>a,\ \alpha \geq 0,$ and $\delta >0.$ Then
\begin{equation*}
I_{a^{+}}^{\alpha ;\psi }(\psi (t)-\psi (a))^{\delta -1}=\frac{\Gamma
(\delta )}{\Gamma (\delta +\alpha )}(\psi (t)-\psi (a))^{\alpha +\delta -1},
\end{equation*}
and if $0<\alpha <1,$ we have%
\begin{equation*}
D_{a^{+}}^{\alpha ;\psi }(\psi (t)-\psi (a))^{\alpha -1}=0.
\end{equation*}
\end{lemma}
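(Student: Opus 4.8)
The plan is to derive the integral identity straight from Definition~\ref{r} of the $\psi$-Riemann--Liouville integral and then read off the derivative identity as an immediate corollary. First I would dispose of the degenerate case $\alpha=0$: there $I_{a^{+}}^{0;\psi}$ is the identity operator and the claimed formula reduces to the trivial equality $\Gamma(\delta)/\Gamma(\delta)=1$. So from here on assume $\alpha>0$.

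For $\alpha>0$, starting from \eqref{a2} write
\[
I_{a^{+}}^{\alpha ;\psi }(\psi (t)-\psi (a))^{\delta -1}=\frac{1}{\Gamma (\alpha )}\int_{a}^{t}\psi^{\prime }(s)\,(\psi (t)-\psi (s))^{\alpha -1}(\psi (s)-\psi (a))^{\delta -1}\,ds,
\]
and perform the substitution $u=\dfrac{\psi(s)-\psi(a)}{\psi(t)-\psi(a)}$. Because $\psi$ is increasing and differentiable with $\psi'\neq0$ (Definition~\ref{r}), this is a legitimate change of variables carrying $[a,t]$ onto $[0,1]$, with $\psi'(s)\,ds=(\psi(t)-\psi(a))\,du$, $\psi(t)-\psi(s)=(\psi(t)-\psi(a))(1-u)$ and $\psi(s)-\psi(a)=(\psi(t)-\psi(a))u$. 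Pulling the resulting powers of $(\psi(t)-\psi(a))$ out of the integral turns the right-hand side into
\[
\frac{(\psi(t)-\psi(a))^{\alpha+\delta-1}}{\Gamma(\alpha)}\int_{0}^{1}u^{\delta-1}(1-u)^{\alpha-1}\,du=\frac{(\psi(t)-\psi(a))^{\alpha+\delta-1}}{\Gamma(\alpha)}\,B(\delta,\alpha),
\]
where $B(\delta,\alpha)$ is the Euler Beta function, and the integral converges precisely because $\delta>0$ and $\alpha>0$. Invoking the Beta--Gamma relation $B(\delta,\alpha)=\Gamma(\delta)\Gamma(\alpha)/\Gamma(\delta+\alpha)$ cancels the factor $\Gamma(\alpha)$ and yields the stated formula.

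For the second identity, with $0<\alpha<1$ we have $n=1$ in \eqref{a3}, so $D_{a^{+}}^{\alpha;\psi}h(t)=\frac{1}{\psi'(t)}\frac{d}{dt}\,I_{a^{+}}^{1-\alpha;\psi}h(t)$. Applying the part just proved with integration order $1-\alpha$ and exponent parameter $\delta=\alpha$ gives $I_{a^{+}}^{1-\alpha;\psi}(\psi(t)-\psi(a))^{\alpha-1}=\frac{\Gamma(\alpha)}{\Gamma(1)}(\psi(t)-\psi(a))^{0}=\Gamma(\alpha)$, a constant, which the $\psi$-derivative $\frac{1}{\psi'(t)}\frac{d}{dt}$ annihilates. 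I do not anticipate any real obstacle here; the only point deserving a line of care is verifying that the substitution is valid and that the Beta integral converges under the hypotheses $\alpha>0$, $\delta>0$, after which everything is routine bookkeeping with the Gamma function.
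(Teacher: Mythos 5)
The paper states Lemma~\ref{r2} as a quoted result from \cite{KL1} and gives no proof of its own, so there is nothing to compare against; your argument is the standard one and is correct. The substitution $u=(\psi(s)-\psi(a))/(\psi(t)-\psi(a))$ reducing the integral to $B(\delta,\alpha)=\Gamma(\delta)\Gamma(\alpha)/\Gamma(\delta+\alpha)$, and then applying the resulting formula with order $1-\alpha$ and $\delta=\alpha$ so that $I_{a^{+}}^{1-\alpha;\psi}(\psi(t)-\psi(a))^{\alpha-1}=\Gamma(\alpha)$ is a constant annihilated by $\frac{1}{\psi'(t)}\frac{d}{dt}$, is exactly how this is proved in the cited reference.
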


\begin{lemma}
\label{A4} Let $0<\alpha <1$, $0\leq \beta \leq 1,$ $\gamma =\alpha
+\beta -\alpha \beta $ and let $\psi \in C^{1}([a,b],%
\mathbb{R}
)$ be an increasing function such that $\psi (t)\neq 0$, for
all $t\in \lbrack a,b].$ Then

\begin{description}
\item[(i)] $I_{a^{+}}^{\alpha ;\psi }(\cdot )$ maps $C[a,b]$ into $C[a,b].$

\item[(ii)] $I_{a^{+}}^{\alpha ;\psi }(\cdot )$ is bounded from $C_{1-\gamma
;\psi }[a,b]$ into $C_{1-\gamma ;\psi }[a,b].$

\item[(iii)] If $\gamma \leq \alpha ,$ then, $I_{a^{+}}^{\alpha ;\psi
}(\cdot )$ is bounded from $C_{1-\gamma ;\psi }[a,b]$ into $C[a,b]$.
\end{description}
\end{lemma}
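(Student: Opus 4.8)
The plan is to argue directly from the integral representation~\eqref{a2} of $I_{a^{+}}^{\alpha ;\psi }$, treating in each part the norm estimate and the continuity separately, and reducing the singular kernel $(\psi(t)-\psi(s))^{\alpha-1}$ to the classical Riemann-Liouville situation through the substitution $\psi(s)=\psi(a)+(\psi(t)-\psi(a))\tau$. This substitution is legitimate since, by hypothesis, $\psi\in C^{1}[a,b]$ is increasing with $\psi'\neq 0$, hence a $C^{1}$-diffeomorphism of $[a,b]$ onto $[\psi(a),\psi(b)]$, and it turns~\eqref{a2} into
\begin{equation*}
I_{a^{+}}^{\alpha ;\psi }h(t)=\frac{(\psi(t)-\psi(a))^{\alpha}}{\Gamma(\alpha)}\int_{0}^{1}(1-\tau)^{\alpha-1}\,h\big(\psi^{-1}(\psi(a)+(\psi(t)-\psi(a))\tau)\big)\,d\tau .
\end{equation*}
For \textbf{(i)}, if $h\in C[a,b]$ then $|h|\le\|h\|_{C[a,b]}$ and Lemma~\ref{r2} with $\delta=1$ gives $|I_{a^{+}}^{\alpha ;\psi }h(t)|\le\|h\|_{C[a,b]}(\psi(t)-\psi(a))^{\alpha}/\Gamma(\alpha+1)$, so $I_{a^{+}}^{\alpha ;\psi }h$ is bounded on $[a,b]$ and tends to $0=I_{a^{+}}^{\alpha ;\psi }h(a)$ as $t\to a^{+}$; at an interior point $t_{0}$, the displayed formula together with the dominated convergence theorem (the integrand being dominated by $\|h\|_{C[a,b]}(1-\tau)^{\alpha-1}\in L^{1}(0,1)$ and converging pointwise as $t\to t_{0}$ because $h\circ\psi^{-1}$ is continuous) yields continuity at $t_{0}$.

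For \textbf{(ii)}, let $h\in C_{1-\gamma ;\psi }[a,b]$ and put $g(s)=(\psi(s)-\psi(a))^{1-\gamma}h(s)\in C[a,b]$, so that $|h(s)|\le\|h\|_{C_{1-\gamma ;\psi }[a,b]}(\psi(s)-\psi(a))^{\gamma-1}$. Substituting this bound into~\eqref{a2} and invoking Lemma~\ref{r2} with $\delta=\gamma$ gives
\begin{equation*}
|I_{a^{+}}^{\alpha ;\psi }h(t)|\le\frac{\Gamma(\gamma)}{\Gamma(\alpha+\gamma)}\,\|h\|_{C_{1-\gamma ;\psi }[a,b]}\,(\psi(t)-\psi(a))^{\alpha+\gamma-1},
\end{equation*}
whence $(\psi(t)-\psi(a))^{1-\gamma}|I_{a^{+}}^{\alpha ;\psi }h(t)|\le\frac{\Gamma(\gamma)}{\Gamma(\alpha+\gamma)}(\psi(b)-\psi(a))^{\alpha}\|h\|_{C_{1-\gamma ;\psi }[a,b]}$, which is the asserted bound with $M=\Gamma(\gamma)(\psi(b)-\psi(a))^{\alpha}/\Gamma(\alpha+\gamma)$. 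To see that $t\mapsto(\psi(t)-\psi(a))^{1-\gamma}I_{a^{+}}^{\alpha ;\psi }h(t)$ lies in $C[a,b]$, substitute $h(s)=(\psi(s)-\psi(a))^{\gamma-1}g(s)$ in the change-of-variables formula to obtain
\begin{equation*}
(\psi(t)-\psi(a))^{1-\gamma}I_{a^{+}}^{\alpha ;\psi }h(t)=\frac{(\psi(t)-\psi(a))^{\alpha}}{\Gamma(\alpha)}\int_{0}^{1}(1-\tau)^{\alpha-1}\tau^{\gamma-1}\,g\big(\psi^{-1}(\psi(a)+(\psi(t)-\psi(a))\tau)\big)\,d\tau ;
\end{equation*}
since $(1-\tau)^{\alpha-1}\tau^{\gamma-1}\in L^{1}(0,1)$ and $g$ is uniformly continuous, dominated convergence makes the integral a continuous function of $t$ on $[a,b]$, and multiplication by the continuous factor $(\psi(t)-\psi(a))^{\alpha}$ (which vanishes at $t=a$) finishes the argument.

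For \textbf{(iii)}, the hypothesis guarantees that the exponent $\alpha+\gamma-1$ occurring in the estimate of part~(ii) is nonnegative, so $(\psi(t)-\psi(a))^{\alpha+\gamma-1}\le(\psi(b)-\psi(a))^{\alpha+\gamma-1}$ on all of $[a,b]$; hence that estimate already reads $\|I_{a^{+}}^{\alpha ;\psi }h\|_{C[a,b]}\le\frac{\Gamma(\gamma)(\psi(b)-\psi(a))^{\alpha+\gamma-1}}{\Gamma(\alpha+\gamma)}\|h\|_{C_{1-\gamma ;\psi }[a,b]}$, and the continuity of $I_{a^{+}}^{\alpha ;\psi }h$ on $[a,b]$ follows from the same change of variables, now because the power $(\psi(t)-\psi(a))^{\alpha+\gamma-1}$ is itself continuous on $[a,b]$. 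In all three parts the norm bounds are immediate from Lemma~\ref{r2}; the one genuinely delicate point is establishing honest continuity across the singular kernel, and I expect the change of variables $\psi(s)=\psi(a)+(\psi(t)-\psi(a))\tau$ combined with dominated convergence to dispose of it, including the behaviour at the left endpoint $t=a$.
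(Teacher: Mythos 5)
The paper states Lemma \ref{A4} without proof (it is the standard mapping property of the fractional integral on weighted spaces, transported to the $\psi$-kernel), so there is no in-paper argument to compare against; your proposal has to stand on its own. Your treatment of (i) and (ii) does: the substitution $\psi(s)=\psi(a)+(\psi(t)-\psi(a))\tau$ is legitimate under the stated hypotheses, the norm bounds follow from Lemma \ref{r2} with $\delta=1$ and $\delta=\gamma$ exactly as you say (note $\gamma=\alpha+\beta(1-\alpha)\ge\alpha>0$, so $\tau^{\gamma-1}$ is integrable on $(0,1)$), and the dominated-convergence argument is the right way to obtain honest continuity through the singular kernel, including the behaviour at $t=a$.

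Part (iii), however, contains a genuine gap. The step ``the hypothesis guarantees that the exponent $\alpha+\gamma-1$ is nonnegative'' is false: nonnegativity of $\alpha+\gamma-1$ is the condition $\gamma\ge 1-\alpha$, which does not follow from $\gamma\le\alpha$. Worse, since $\gamma=\alpha+\beta(1-\alpha)\ge\alpha$ always holds here, the hypothesis $\gamma\le\alpha$ forces $\beta=0$ and $\gamma=\alpha$, so the exponent in question is $2\alpha-1$, negative whenever $\alpha<1/2$. Concretely, take $\alpha=\gamma=1/4$ and $h(t)=(\psi(t)-\psi(a))^{-3/4}\in C_{1-\gamma;\psi}[a,b]$; Lemma \ref{r2} gives $I_{a^{+}}^{\alpha;\psi}h(t)=\frac{\Gamma(1/4)}{\Gamma(1/2)}(\psi(t)-\psi(a))^{-1/2}$, which is unbounded near $a$ and hence not in $C[a,b]$. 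So your deduction fails, and in fact the statement as literally printed is false. The hypothesis that makes both the lemma and your argument work is $1-\gamma\le\alpha$, i.e.\ the weight exponent of the space does not exceed the order of integration --- this is the form in which the classical result is stated for spaces with weight $(\psi(t)-\psi(a))^{\mu}$, and it is what Lemma \ref{z2} (applied with weight $1-\gamma$) demands. Under $1-\gamma\le\alpha$ your estimate and your change-of-variables continuity argument for (iii) go through verbatim; you should flag the hypothesis of (iii) as a misprint rather than try to prove the implication as written.
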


\begin{lemma}
\label{z2} Let $\alpha >0$, $0\leq \gamma <1$ and $h\in C_{\gamma ;\psi
}[a,b]$. If $\alpha >\gamma ,\ $then $I_{a^{+}}^{\alpha ;\psi }h$\ $\in
C[a,b] $ and
\begin{equation*}
I_{a^{+}}^{\alpha ;\psi }h(a)=\underset{t\rightarrow a^{+}}{\lim }%
I_{a^{+}}^{\alpha ;\psi }h(t)=0.
\end{equation*}
\end{lemma}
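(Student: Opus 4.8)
The plan is to dominate $I_{a^{+}}^{\alpha ;\psi }h$ pointwise by a positive power of $\psi(t)-\psi(a)$ and then read off both assertions from that single estimate. First I would use the definition of the weighted norm: since $h\in C_{\gamma ;\psi }[a,b]$, the function $(\psi(t)-\psi(a))^{\gamma}h(t)$ is continuous on $[a,b]$, hence bounded, so
\[
|h(s)|\le \|h\|_{C_{\gamma ;\psi }[a,b]}\,(\psi(s)-\psi(a))^{-\gamma},\qquad s\in(a,b].
\]
Substituting this into the integral representation \eqref{a2} and using $\psi'(s)>0$ gives
\[
\bigl|I_{a^{+}}^{\alpha ;\psi }h(t)\bigr|\le \frac{\|h\|_{C_{\gamma ;\psi }[a,b]}}{\Gamma(\alpha)}\int_{a}^{t}\psi'(s)(\psi(t)-\psi(s))^{\alpha-1}(\psi(s)-\psi(a))^{-\gamma}\,ds=\|h\|_{C_{\gamma ;\psi }[a,b]}\,I_{a^{+}}^{\alpha ;\psi }(\psi(\cdot)-\psi(a))^{-\gamma}(t),
\]
so the problem reduces to the $\psi$-Riemann–Liouville integral of a pure power.

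Next I would evaluate the last integral by Lemma \ref{r2} with $\delta=1-\gamma$, which is admissible because $0\le\gamma<1$ forces $\delta>0$; this yields
\[
\bigl|I_{a^{+}}^{\alpha ;\psi }h(t)\bigr|\le \|h\|_{C_{\gamma ;\psi }[a,b]}\,\frac{\Gamma(1-\gamma)}{\Gamma(1-\gamma+\alpha)}\,(\psi(t)-\psi(a))^{\alpha-\gamma},\qquad t\in(a,b].
\]
Here the hypothesis $\alpha>\gamma$ enters decisively: the exponent $\alpha-\gamma$ is strictly positive, and since $\psi$ is continuous with $\psi(t)\to\psi(a)$ as $t\to a^{+}$, the right-hand side tends to $0$. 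Hence $\lim_{t\to a^{+}}I_{a^{+}}^{\alpha ;\psi }h(t)=0$, which is precisely the asserted value $I_{a^{+}}^{\alpha ;\psi }h(a)=0$ once the function is assigned this limit at the endpoint.

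It remains to show $I_{a^{+}}^{\alpha ;\psi }h\in C[a,b]$. On $(a,b]$ this is already covered by Lemma \ref{A4}: the condition $\gamma<\alpha$ in particular gives $\gamma\le\alpha$, so part (iii) (or (ii)) applies and $I_{a^{+}}^{\alpha ;\psi }h$ is continuous (indeed bounded) on $(a,b]$; gluing this with the limit just computed extends continuity to $t=a$, giving $I_{a^{+}}^{\alpha ;\psi }h\in C[a,b]$. If a self-contained argument is preferred, one fixes $\varepsilon>0$, uses the displayed power bound to make the contribution of the integral over $(a,a+\delta)$ small uniformly in $t$, and handles the remaining piece by the uniform continuity of $h$, $\psi$ and of the kernel on the relevant compact set. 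I expect this continuity at the left endpoint — i.e. controlling the integrable singularity of $h$ at $a$ — to be the only nontrivial point; the rest is the one-line weighted-norm estimate together with Lemma \ref{r2}.
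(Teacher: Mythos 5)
Your argument is correct. Note that the paper states Lemma \ref{z2} as a preliminary fact without giving a proof (it is a standard result from the cited references), so there is nothing to compare against line by line; your route --- bound $|h(s)|$ by $\|h\|_{C_{\gamma;\psi}[a,b]}(\psi(s)-\psi(a))^{-\gamma}$, reduce to $I_{a^{+}}^{\alpha;\psi}$ of a power function, and evaluate via Lemma \ref{r2} with $\delta=1-\gamma$ to get the factor $(\psi(t)-\psi(a))^{\alpha-\gamma}$ with positive exponent --- is exactly the standard proof, and the hypothesis $\alpha>\gamma$ is used in the right place.

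One small caution on the continuity step: Lemma \ref{A4} as stated in the paper is phrased for the weight $1-\gamma$ with $\gamma=\alpha+\beta-\alpha\beta$, so its parts (ii)--(iii) do not literally apply to $h\in C_{\gamma;\psi}[a,b]$ without relabelling the weight exponent, and after relabelling the condition in (iii) reads $1-\gamma\leq\alpha$ rather than $\gamma<\alpha$. This is a defect of the paper's statement rather than of your argument, and your self-contained alternative (uniform smallness of the contribution near $a$ from the power bound, plus uniform continuity of the kernel away from the singularity) is the right way to close it; if you cite \ref{A4} you should only use it for continuity on compact subintervals $[a+\delta,b]$ and supply the behaviour at $t=a$ from your own estimate, which you already do.
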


\begin{lemma}
\label{ff} \cite{FK} Let $0\leq \gamma <1,$ $a<c<b,$ $h\in C_{\gamma ;\psi
}[a,c],$ $h\in C[c,b]$ and $h$ is continuous at $c$. Then $h\in C_{\gamma
;\psi }[a,b]. $
\end{lemma}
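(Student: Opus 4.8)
The plan is to reduce the claim to the elementary pasting principle: a real function that is continuous on two closed subintervals sharing an endpoint, and takes the same value at that common endpoint, is continuous on their union. Write $g(t):=(\psi(t)-\psi(a))^{\gamma}h(t)$ for $t\in(a,b]$. By the very definition of the weighted spaces, proving $h\in C_{\gamma;\psi}[a,b]$ amounts to showing that $g$ admits a continuous extension to $[a,b]$. I will establish this on $[a,c]$ and on $[c,b]$ separately, then glue at $c$.

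First I would handle $[a,c]$. The hypothesis $h\in C_{\gamma;\psi}[a,c]$ says precisely that $g|_{(a,c]}$ extends continuously to $[a,c]$, so setting $g(a):=\lim_{t\to a^{+}}(\psi(t)-\psi(a))^{\gamma}h(t)$ we get $g\in C[a,c]$; in particular $g$ is left-continuous at $c$ with $\lim_{t\to c^{-}}g(t)=(\psi(c)-\psi(a))^{\gamma}h(c)$. Next I would treat $[c,b]$: since $\psi$ is continuous and increasing (as in Definition \ref{r}) and $c>a$, the weight $t\mapsto(\psi(t)-\psi(a))^{\gamma}$ is continuous, strictly positive and bounded on $[c,b]$ — there is no singularity because we stay bounded away from $a$. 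Multiplying by $h\in C[c,b]$ gives $g\in C[c,b]$, and in particular $g$ is right-continuous at $c$ with $\lim_{t\to c^{+}}g(t)=(\psi(c)-\psi(a))^{\gamma}\lim_{t\to c^{+}}h(t)$. This is the point where the hypothesis that $h$ is continuous at $c$ is used: it forces $\lim_{t\to c^{+}}h(t)=h(c)$, so the two one-sided limits of $g$ at $c$ coincide with the common value $g(c)=(\psi(c)-\psi(a))^{\gamma}h(c)$.

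Finally, combining $g\in C[a,c]$, $g\in C[c,b]$ and the matching of the one-sided limits of $g$ at $c$ yields $g\in C[a,b]$; that is, $(\psi(t)-\psi(a))^{\gamma}h(t)\in C[a,b]$, which is exactly the statement $h\in C_{\gamma;\psi}[a,b]$. I do not expect a genuine obstacle here: the argument is essentially a gluing lemma. The only points deserving a line of care are that the weight $(\psi(t)-\psi(a))^{\gamma}$ has no singularity on $[c,b]$ (so multiplication by it preserves continuity there, unlike near $a$), and that the ``continuous at $c$'' hypothesis is precisely what reconciles the left piece of $g$ — which comes from membership in the weighted space $C_{\gamma;\psi}[a,c]$ — with the right piece, which comes from ordinary continuity on $[c,b]$.
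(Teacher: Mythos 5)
Your argument is correct and is precisely the standard gluing proof of this lemma; the paper itself gives no proof, merely citing \cite{FK}, where the same pasting argument is carried out for the weight $(t-a)^{\gamma}$, and your adaptation to the weight $(\psi(t)-\psi(a))^{\gamma}$ is exactly what is needed. You correctly isolate the two points that require care: the weight is continuous, positive and bounded on $[c,b]$ because one stays away from $a$, and the hypothesis that $h$ is continuous at $c$ is what matches the one-sided limits of $g(t)=(\psi(t)-\psi(a))^{\gamma}h(t)$ at the junction.
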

\begin{theorem}\cite{JO18}
\label{r1} If $h\in C^{n}[a,b],$ $n-1<\alpha <n,$ $0\leq
\beta \leq 1,$ and $\gamma =\alpha +\beta -\alpha \beta .$\ Then for all $t\in (a,b],$
\begin{equation*}
I_{a^{+}}^{\alpha ;\psi }D_{a^{+}}^{\alpha ,\beta ;\psi
}h(t)=h(t)-\sum_{k=1}^{n}\frac{\left[ \psi (t)-\psi (a)\right] ^{\gamma -k}}{%
\Gamma (\gamma -k+1)}h_{\psi }^{(n-k)}I_{a^{+}}^{(1-\beta )(n-\alpha );\psi
}h(a),
\end{equation*}%
In particular, if $0<\alpha <1,$ we have
\begin{equation*}
I_{a^{+}}^{\alpha ;\psi }D_{a^{+}}^{\alpha ,\beta ;\psi }h(t)=h(t)-\frac{%
\left[ \psi (t)-\psi (a)\right] ^{\gamma -1}}{\Gamma (\gamma )}%
I_{a^{+}}^{(1-\beta )(1-\alpha );\psi }h(a).
\end{equation*}

Moreover, if $h\in C_{1-\gamma ;\psi }[a,b]$ and $I_{a^{+}}^{1-\gamma ;\psi
}h\in C_{1-\gamma ;\psi }^{1}[a,b]$ such that $0<\gamma <1.$ Then for all $t\in (a,b],$
\begin{equation*}
I_{a^{+}}^{\gamma ;\psi }D_{a^{+}}^{\gamma ;\psi }h(t)=h(t)-\frac{\left[
\psi (t)-\psi (a)\right] ^{\gamma -1}}{\Gamma (\gamma )}I_{a^{+}}^{1-\gamma
;\psi }h(a).
\end{equation*}%
\end{theorem}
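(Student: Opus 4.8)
The plan is to dismantle $D_{a^{+}}^{\alpha,\beta;\psi}$ through the $\psi$-Riemann--Liouville derivative $D_{a^{+}}^{\gamma;\psi}$ and then strip ordinary derivatives off one by one, each pass producing one $(\psi(t)-\psi(a))^{\gamma-k}$ boundary term. Put $\gamma=\alpha+\beta(n-\alpha)$, so $\gamma-\alpha=\beta(n-\alpha)$, $n-\gamma=(1-\beta)(n-\alpha)\in[0,1)$ and $\gamma\in(n-1,n]$ with $\gamma=n$ precisely when $\beta=1$ (for $0<\alpha<1$ this is the $\gamma$ of the statement). By Definition \ref{M1}, $D_{a^{+}}^{\alpha,\beta;\psi}h=I_{a^{+}}^{\beta(n-\alpha);\psi}D_{a^{+}}^{\gamma;\psi}h$ and $D_{a^{+}}^{\gamma;\psi}h=g_{\psi}^{[n]}$ with $g:=I_{a^{+}}^{(1-\beta)(n-\alpha);\psi}h=I_{a^{+}}^{n-\gamma;\psi}h$; for $h\in C^{n}[a,b]$ the functions $g$ and $g_{\psi}^{[1]},\dots,g_{\psi}^{[n]}$ are continuous on $(a,b]$. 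Applying $I_{a^{+}}^{\alpha;\psi}$ and the semigroup law (Lemma \ref{a4}),
\[
I_{a^{+}}^{\alpha;\psi}D_{a^{+}}^{\alpha,\beta;\psi}h=I_{a^{+}}^{\alpha;\psi}I_{a^{+}}^{\gamma-\alpha;\psi}g_{\psi}^{[n]}=I_{a^{+}}^{\gamma;\psi}g_{\psi}^{[n]},
\]
so the theorem reduces to computing $I_{a^{+}}^{\gamma;\psi}g_{\psi}^{[n]}$ for $\gamma\in(n-1,n]$ (for $n=1$ this reduction is essentially Lemma \ref{r3}); the third assertion is then the $n=1$ instance of this computation, read off on the weighted space.

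The one computational brick I would prove first is: for $\nu\ge0$ and $\phi\in C^{1}[a,b]$,
\[
I_{a^{+}}^{\nu+1;\psi}\phi_{\psi}^{[1]}(t)=I_{a^{+}}^{\nu;\psi}\phi(t)-\frac{(\psi(t)-\psi(a))^{\nu}}{\Gamma(\nu+1)}\,\phi(a),
\]
which follows from $\psi'(s)\phi_{\psi}^{[1]}(s)=\phi'(s)$ and one integration by parts of $\int_{a}^{t}(\psi(t)-\psi(s))^{\nu}\phi'(s)\,ds$. Applying the brick in turn to $I_{a^{+}}^{\gamma;\psi}g_{\psi}^{[n]}=I_{a^{+}}^{(\gamma-1)+1;\psi}(g_{\psi}^{[n-1]})_{\psi}^{[1]}$, then to the resulting $I_{a^{+}}^{(\gamma-2)+1;\psi}(g_{\psi}^{[n-2]})_{\psi}^{[1]}$, and so on — legitimate for the first $n-1$ passes, since $\gamma-j>0$ whenever $1\le j\le n-1$ — gives
\[
I_{a^{+}}^{\gamma;\psi}g_{\psi}^{[n]}(t)=I_{a^{+}}^{\gamma-n+1;\psi}g_{\psi}^{[1]}(t)-\sum_{j=1}^{n-1}\frac{(\psi(t)-\psi(a))^{\gamma-j}}{\Gamma(\gamma-j+1)}\,g_{\psi}^{[n-j]}(a).
\]

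It remains to identify $I_{a^{+}}^{\gamma-n+1;\psi}g_{\psi}^{[1]}$. If $\beta=1$ then $\gamma=n$, a further application of the brick (with $\nu=0$) turns it into $g-g(a)=h-h(a)$, and the sum closes up to $j=n$. If $\beta<1$ then $\rho:=\gamma-n+1\in(0,1)$; here $g(a)=I_{a^{+}}^{(1-\beta)(n-\alpha);\psi}h(a)=0$ by Lemma \ref{z2} (since $(1-\beta)(n-\alpha)>0$ and $h$ is continuous), so applying $I_{a^{+}}^{1;\psi}$ together with the brick and Lemma \ref{a4} once more,
\[
I_{a^{+}}^{1;\psi}\bigl(I_{a^{+}}^{\rho;\psi}g_{\psi}^{[1]}\bigr)=I_{a^{+}}^{1+\rho;\psi}g_{\psi}^{[1]}=I_{a^{+}}^{\rho;\psi}g-\frac{(\psi(t)-\psi(a))^{\rho}}{\Gamma(\rho+1)}g(a)=I_{a^{+}}^{\rho;\psi}I_{a^{+}}^{1-\rho;\psi}h=I_{a^{+}}^{1;\psi}h;
\]
since $I_{a^{+}}^{1;\psi}$ is injective (differentiate in $t$, $\psi'\neq0$) and $h$ is continuous, $I_{a^{+}}^{\rho;\psi}g_{\psi}^{[1]}=h$ on $(a,b]$. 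The missing $j=n$ term $\frac{(\psi(t)-\psi(a))^{\gamma-n}}{\Gamma(\gamma-n+1)}g(a)$ is $0$, so it may be adjoined, and rewriting $g_{\psi}^{[n-k]}(a)=\bigl(h_{\psi}^{[n-k]}I_{a^{+}}^{(1-\beta)(n-\alpha);\psi}h\bigr)(a)$ yields the first assertion; the particular case is $n=1$, where only $k=1$ survives. For the weighted statement, $D_{a^{+}}^{\gamma;\psi}h=g_{\psi}^{[1]}$ with $g=I_{a^{+}}^{1-\gamma;\psi}h\in C_{1-\gamma;\psi}^{1}[a,b]$, and the same $I_{a^{+}}^{1;\psi}$ trick applies except that $g(a)$ need not vanish: from $I_{a^{+}}^{1;\psi}(I_{a^{+}}^{\gamma;\psi}g_{\psi}^{[1]})=I_{a^{+}}^{1;\psi}h-\frac{(\psi(t)-\psi(a))^{\gamma}}{\Gamma(\gamma+1)}g(a)$ (part (i) of Lemma \ref{a4} on $C_{1-\gamma;\psi}[a,b]$) one rewrites $\frac{(\psi(t)-\psi(a))^{\gamma}}{\Gamma(\gamma+1)}g(a)=I_{a^{+}}^{1;\psi}\bigl[\frac{(\psi(t)-\psi(a))^{\gamma-1}}{\Gamma(\gamma)}g(a)\bigr]$ and cancels $I_{a^{+}}^{1;\psi}$, injective on $C_{1-\gamma;\psi}[a,b]$; Lemma \ref{A4} keeps every intermediate function in $C_{1-\gamma;\psi}[a,b]$.

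The step I expect to be the real obstacle is exactly the fractional-order leftover $I_{a^{+}}^{\gamma-n+1;\psi}g_{\psi}^{[1]}$ when $\beta<1$: a direct integration by parts there manufactures a boundary term $\frac{(\psi(t)-\psi(a))^{\gamma-n}}{\Gamma(\gamma-n+1)}g(a)$ with a negative exponent, harmless only because $g(a)=I_{a^{+}}^{(1-\beta)(n-\alpha);\psi}h(a)=0$, which is why one is forced into the $I_{a^{+}}^{1;\psi}$-detour plus an injectivity/uniqueness argument rather than a bare computation. Closely tied to this is endpoint regularity for $n\ge2$: there $g_{\psi}^{[n-k]}$ (hence $D_{a^{+}}^{\gamma;\psi}h$ and the correction coefficients) may be singular at $a$ when $\beta\in(0,1)$, so for the identity to be literally valid for every $h\in C^{n}[a,b]$ one effectively needs $h$ and enough of its $\psi$-derivatives to vanish at $a$ (otherwise it is read on a suitable subspace or in a limiting sense). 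For $n=1$ — the case relevant to the rest of the paper — none of this arises, and the coefficient $I_{a^{+}}^{(1-\beta)(1-\alpha);\psi}h(a)$ equals $0$ if $\beta<1$ and $h(a)$ if $\beta=1$.
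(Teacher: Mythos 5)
The paper does not prove Theorem \ref{r1} at all: it is imported verbatim from \cite{JO18} as a preliminary, so there is no in-paper argument to compare yours against. Judged on its own, your reconstruction is correct and is essentially the standard route taken in the source: reduce $I_{a^{+}}^{\alpha;\psi}D_{a^{+}}^{\alpha,\beta;\psi}h$ to $I_{a^{+}}^{\gamma;\psi}g_{\psi}^{[n]}$ with $g=I_{a^{+}}^{(1-\beta)(n-\alpha);\psi}h$ via the semigroup law, peel off one $\psi$-derivative per integration by parts to generate the $(\psi(t)-\psi(a))^{\gamma-k}$ correction terms, and treat the fractional leftover $I_{a^{+}}^{\gamma-n+1;\psi}g_{\psi}^{[1]}$ by composing with $I_{a^{+}}^{1;\psi}$ and cancelling it by injectivity. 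The two delicate points you single out are exactly the right ones: the vanishing $g(a)=I_{a^{+}}^{(1-\beta)(n-\alpha);\psi}h(a)=0$ for $\beta<1$ (Lemma \ref{z2}) is what saves the negative-exponent boundary term, and for $n\ge 2$ the values $g_{\psi}^{[n-k]}(a)$ need not exist for an arbitrary $h\in C^{n}[a,b]$ --- but that imprecision is already present in the statement itself (which writes those limits), so it is a defect inherited from \cite{JO18} rather than a gap you introduced. One small point worth tightening if you write this up: your ``brick'' is stated for $\phi\in C^{1}[a,b]$, whereas the intermediate functions $g_{\psi}^{[n-j]}$ and $g=I_{a^{+}}^{1-\gamma;\psi}h$ in the weighted case are only $C^{1}$ on $(a,b]$ with an $L^{1}$ derivative; the integration by parts still goes through as an improper integral once the boundary value at $a$ exists, but you should say so explicitly. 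For the $n=1$ case actually used in the rest of the paper (Lemma \ref{AA}), none of these issues arise and your argument is complete.
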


\begin{lemma}
\label{L2} \cite{JO19} (Gronwall's lemma) Let $u,v,$ be two integrable functions and $h$ is
continuous on $[a,b]$. Let $\psi \in C[a,b]$ be an increasing
function such that $\psi ^{\prime }(t)\neq 0,\forall t\in \lbrack a,b].$
Assume that $u$ and $v$ are nonnegative $h$ is nonnegative and
nondecreasing. If
\begin{equation*}
u(t)\leq v(t)+h(t)\int_{a}^{t}\psi ^{\prime }(s)(\psi (t)-\psi (s))^{\alpha
-1}u(s)ds,
\end{equation*}%
then, for all $t\in \lbrack a,b],$we have
\begin{equation}
u(t)\leq v(t)+\int_{a}^{t}\sum_{k=1}^{\infty }\frac{\left[ h(t)\Gamma
(\alpha )\right] ^{k}}{\Gamma (\alpha k)}\psi ^{\prime }(s)(\psi (t)-\psi
(s))^{\alpha k-1}v(s)ds.  \label{t}
\end{equation}%
Further, if $v$ is a nondecreasing function on $[a,b]$ then
\begin{equation*}
u(t)\leq v(t)E_{\alpha }[h(t)\Gamma (\alpha )(\psi (t)-\psi (a))^{\alpha }],
\end{equation*}%
where $E_{\alpha }(\cdot )$ is the Mittag-Leffler function.
\end{lemma}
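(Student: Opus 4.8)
\noindent\emph{Proof sketch.} The plan is the classical Gronwall iteration adapted to the $\psi$-kernel. For a nonnegative measurable function $\phi$ on $[a,b]$ set
\[
\mathcal{B}\phi(t):=h(t)\int_{a}^{t}\psi^{\prime}(s)(\psi(t)-\psi(s))^{\alpha-1}\phi(s)\,ds .
\]
Since $\psi$ is increasing (so $\psi^{\prime}\ge 0$ and $\psi(t)\ge\psi(s)$ for $s\le t$) and $h\ge 0$, the kernel is nonnegative; hence $\mathcal{B}$ maps nonnegative functions to nonnegative functions, is linear, and is monotone in the sense that $0\le p\le q$ implies $\mathcal{B}p\le\mathcal{B}q$. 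The hypothesis reads $u\le v+\mathcal{B}u$. Applying $\mathcal{B}$ and using monotonicity and linearity repeatedly gives
\[
u(t)\le v(t)+\mathcal{B}u(t)\le v(t)+\mathcal{B}v(t)+\mathcal{B}^{2}u(t)\le\cdots\le\sum_{j=0}^{n-1}\mathcal{B}^{j}v(t)+\mathcal{B}^{n}u(t),\qquad n\in\mathbb{N}.
\]

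The technical core is the estimate, proved by induction on $k$,
\[
\mathcal{B}^{k}\phi(t)\le\frac{[h(t)\Gamma(\alpha)]^{k}}{\Gamma(k\alpha)}\int_{a}^{t}\psi^{\prime}(s)(\psi(t)-\psi(s))^{k\alpha-1}\phi(s)\,ds,\qquad k\ge 1 .
\]
The case $k=1$ is the definition of $\mathcal{B}$. For the inductive step one inserts the bound for $\mathcal{B}^{k}\phi(s)$ into $\mathcal{B}^{k+1}\phi(t)=\mathcal{B}\bigl(\mathcal{B}^{k}\phi\bigr)(t)$; because $h$ is nondecreasing and $s\le t$, one has $h(s)\le h(t)$, so $h(s)^{k}$ can be replaced by $h(t)^{k}$ and pulled outside. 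Interchanging the order of the resulting double integral by Tonelli's theorem and substituting $\psi(s)=\psi(\tau)+r(\psi(t)-\psi(\tau))$, $r\in[0,1]$, in the inner integral turns it into
\[
\int_{\tau}^{t}\psi^{\prime}(s)(\psi(t)-\psi(s))^{\alpha-1}(\psi(s)-\psi(\tau))^{k\alpha-1}\,ds=(\psi(t)-\psi(\tau))^{(k+1)\alpha-1}\,\mathrm{B}(\alpha,k\alpha),
\]
with $\mathrm{B}(\alpha,k\alpha)=\Gamma(\alpha)\Gamma(k\alpha)/\Gamma((k+1)\alpha)$; collecting the constants produces exactly the asserted bound with $k+1$ in place of $k$. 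I expect this Fubini-plus-Beta-integral computation, together with the need to exploit monotonicity of $h$ to pull the intermediate values out, to be the main obstacle.

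Finally I would handle convergence. Put $M:=\max_{t\in[a,b]}h(t)<\infty$ and $K:=\psi(b)-\psi(a)$. For $k\alpha\ge 1$ the estimate above yields $\mathcal{B}^{k}u(t)\le \dfrac{(M\Gamma(\alpha))^{k}K^{k\alpha-1}}{\Gamma(k\alpha)}\,\|\psi^{\prime}\|_{\infty}\,\|u\|_{L^{1}(a,b)}$ (here integrability of $u$ and boundedness of $\psi^{\prime}$ and $h$ enter), and the same bound with $v$ in place of $u$; since $\Gamma(k\alpha)$ grows faster than any geometric sequence, $\mathcal{B}^{n}u(t)\to 0$ uniformly on $[a,b]$ and $\sum_{j\ge 1}\mathcal{B}^{j}v(t)$ converges uniformly. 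Letting $n\to\infty$ in the iteration and interchanging the (nonnegative) summation with integration by Tonelli gives precisely \eqref{t}. If moreover $v$ is nondecreasing, bound $v(s)\le v(t)$ inside \eqref{t} and use the computation $\int_{a}^{t}\psi^{\prime}(s)(\psi(t)-\psi(s))^{k\alpha-1}\,ds=(\psi(t)-\psi(a))^{k\alpha}/(k\alpha)$ (a special case of Lemma~\ref{r2}) together with $k\alpha\,\Gamma(k\alpha)=\Gamma(k\alpha+1)$; the series then becomes $\sum_{k\ge 0}\bigl[h(t)\Gamma(\alpha)(\psi(t)-\psi(a))^{\alpha}\bigr]^{k}/\Gamma(k\alpha+1)=E_{\alpha}\!\bigl[h(t)\Gamma(\alpha)(\psi(t)-\psi(a))^{\alpha}\bigr]$, which is the final assertion.
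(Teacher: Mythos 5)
The paper states this lemma without proof, citing \cite{JO19}; your argument is exactly the standard iteration proof given in that reference (and in the earlier Ye--Gao--Ding argument it generalizes): iterate $u\le v+\mathcal{B}u$, establish the bound on $\mathcal{B}^{k}$ by induction via the Beta-integral identity after pulling $h(s)\le h(t)$ out, kill the remainder using the growth of $\Gamma(k\alpha)$, and specialize to the Mittag--Leffler bound when $v$ is nondecreasing. The proposal is correct and coincides in approach with the source proof.
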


\begin{theorem}
\label{e} (Banach fixed point theorem) Let $(X,d)$ be a nonempty complete
metric space and let $0\leq L<1$. If $T:X\rightarrow X$ is a mapping such
that for every $x,y\in X$
\begin{equation}
d(T(x),T(y))\leq Ld(x,y),  \label{d1}
\end{equation}%
holds. Then the operator $T$\ has a unique fixed point $x^{\ast }\in
X.$ Furthermore, for any $k\in
\mathbb{N}
,$ if $T^{k}$ is the sequence of operators defined by
\begin{equation*}
T^{1}=T\text{ and }T^{k}=TT^{k-1},\text{ }(k\in
\mathbb{N}
\backslash \{1\}).
\end{equation*}%
then for any $x_{0}\in X,$ $\underset{k\rightarrow \infty }{\lim }%
T^{k}(x_{0})=x^{\ast }$(i.e. $\{T^{k}x_{0}\}_{k=1}^{k=\infty }$ converges to
the above fixed point $x^{\ast }$).\newline
The map $T:X\rightarrow X$ satisfying condition \eqref{d1} is called a
contractive map.
\end{theorem}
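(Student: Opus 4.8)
The plan is to run the classical Picard iteration argument. Fix an arbitrary point $x_{0}\in X$ and define the iterates $x_{k}=T^{k}(x_{0})$ for $k\in\mathbb{N}$. First I would use the contraction hypothesis \eqref{d1} repeatedly to control consecutive terms: since $d(x_{k+1},x_{k})=d(T(x_{k}),T(x_{k-1}))\leq L\,d(x_{k},x_{k-1})$, a straightforward induction gives $d(x_{k+1},x_{k})\leq L^{k}\,d(x_{1},x_{0})$ for every $k$.

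Next, for $m>n$ I would apply the triangle inequality along the chain $x_{n},x_{n+1},\dots,x_{m}$ and sum the geometric bound just obtained, yielding
\[
d(x_{m},x_{n})\leq\sum_{j=n}^{m-1}d(x_{j+1},x_{j})\leq\Big(\sum_{j=n}^{m-1}L^{j}\Big)d(x_{1},x_{0})\leq\frac{L^{n}}{1-L}\,d(x_{1},x_{0}).
\]
Because $0\leq L<1$, the right-hand side tends to $0$ as $n\to\infty$, so $(x_{k})$ is a Cauchy sequence; by completeness of $(X,d)$ it converges to some $x^{\ast}\in X$.

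To see that $x^{\ast}$ is a fixed point, I would note that $T$ is continuous — indeed Lipschitz — by \eqref{d1}, hence $T(x^{\ast})=T(\lim_{k}x_{k})=\lim_{k}T(x_{k})=\lim_{k}x_{k+1}=x^{\ast}$; equivalently one estimates $d(x^{\ast},T(x^{\ast}))\leq d(x^{\ast},x_{k+1})+d(T(x_{k}),T(x^{\ast}))\leq d(x^{\ast},x_{k+1})+L\,d(x_{k},x^{\ast})\to 0$. For uniqueness, if $x^{\ast}$ and $y^{\ast}$ are both fixed points then $d(x^{\ast},y^{\ast})=d(T(x^{\ast}),T(y^{\ast}))\leq L\,d(x^{\ast},y^{\ast})$, and since $L<1$ this forces $d(x^{\ast},y^{\ast})=0$, i.e. $x^{\ast}=y^{\ast}$. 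Finally, the convergence claim $\lim_{k\to\infty}T^{k}(x_{0})=x^{\ast}$ for an arbitrary starting point $x_{0}$ is precisely what the Cauchy/completeness step produced, combined with the observation that the limit must coincide with the unique fixed point.

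There is no genuine obstacle in this argument: the only steps requiring any care are the geometric-series estimate that upgrades ``consecutive iterates are close'' to ``the iteration sequence is Cauchy'', and the routine remark that a contraction is continuous so that the limit may be passed through $T$.
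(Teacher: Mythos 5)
Your proof is the standard Picard iteration argument and it is correct and complete: the geometric bound $d(x_{k+1},x_k)\leq L^k d(x_1,x_0)$, the telescoping/geometric-series estimate giving the Cauchy property, completeness to extract the limit, continuity of $T$ to identify the limit as a fixed point, and the $L<1$ contraction to force uniqueness (which also shows the limit is independent of the starting point $x_0$). The paper states this classical theorem as a preliminary without supplying any proof, so there is nothing to compare against; your argument is the canonical one and fills that role correctly.
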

We recall the following weighted spaces:
\begin{equation*}
C_{1-\gamma ;\psi }^{\alpha ,\beta }[a,b]=\{h\in C_{1-\gamma ;\psi
}[a,b],D_{a^{+}}^{\alpha ,\beta ;\psi }h\in C_{1-\gamma ;\psi }[a,b]\}
\end{equation*}%
and
\begin{equation}
C_{1-\gamma ;\psi }^{\gamma }[a,b]=\{h\in C_{1-\gamma ;\psi
}[a,b],D_{a^{+}}^{\gamma ;\psi }h\in C_{1-\gamma ;\psi }[a,b]\}.  \label{a1}
\end{equation}%
Since $D_{a^{+}}^{\alpha ,\beta ;\psi }h=I_{a^{+}}^{\beta (1-\alpha );\psi
}D_{a^{+}}^{\gamma ;\psi }h$ clearly by Lemma \ref{A4}, we have $C_{1-\gamma ;\psi
}^{\gamma }[a,b]\subset C_{1-\gamma ;\psi }^{\alpha ,\beta }[a,b].$

\section{Main results}\label{kk}
In this section, we obtain the equivalence between the generalized Cauchy problem \eqref{11}-\eqref{12}
and the Volterra integral equation
\begin{eqnarray}
u(t) &=&\frac{u_{a}}{\Gamma (\gamma )}(\psi (t)-\psi (a))^{\gamma -1}  \notag
\\
&&+\frac{1}{\Gamma (\alpha )}\int_{a}^{t}\psi ^{\prime }(s)(\psi (t)-\psi
(s))^{\alpha -1}f(s,u_{s},D_{a^{+}}^{\alpha ,\beta ;\psi }u_{s})ds.
\label{y}
\end{eqnarray}
Further we prove the existence and uniqueness of solution to \eqref{11}-\ref{12} by means of Banach fixed point
theorem. We need the following hypotheses and auxiliary lemma:
\begin{description}
\item[(A1)] $f:(a,b]\times
\mathbb{R}
\times
\mathbb{R}
\rightarrow
\mathbb{R}
$ be a function such that $f(\cdot ,u,v)\in C_{1-\gamma ;\psi }[a,b]$ for
any $u,v\in C_{1-\gamma ;\psi }[a,b]$.

\item[(A2)] $f(\cdot ,u,v)$ satisfies the Lipschitz's condition with
respect to $u,v$ and is bounded in a region $G\subset
\mathbb{R}
$ such that
\begin{eqnarray}
\left\Vert f(t,u_{1},v_{1})-f(t,u_{2},v_{2})\right\Vert _{C_{1-\gamma
;\psi }[a,b]}  \notag\leq &M\left\Vert u_{1}-u_{2}\right\Vert _{C_{1-\gamma ;\psi
}[a,b]}+M^{\ast }\left\Vert v_{1}-v_{2}\right\Vert _{C_{1-\gamma ;\psi
}[a,b]},  \label{z}
\end{eqnarray}%
for all $t\in (a,b]$ and for all $u_{i},v_{i}\in G,$ $(i=1,2)$ where $M>0$
and $M^{\ast }<1.$
\end{description}
\begin{lemma}
\label{AA} Let $0<\alpha <1,$ $0\leq \beta \leq 1$ and $\gamma =\alpha
+\beta -\alpha \beta ,$ and let $f$ satiesfy (A1). If $u\in C_{1-\gamma
;\psi }^{\gamma }[a,b]$ then $u$ satisfies the cauchy problem (%
\ref{11})--(\ref{12}) if and only if $u$ satisfies the Volterra integral
equation \eqref{y}.
\end{lemma}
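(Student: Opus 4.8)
The plan is to prove the equivalence in two directions, using the fractional integral and derivative identities collected in the Preliminaries.

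First I would show that a solution $u \in C_{1-\gamma;\psi}^{\gamma}[a,b]$ of the Cauchy problem \eqref{11}--\eqref{12} satisfies the Volterra equation \eqref{y}. Applying the operator $I_{a^{+}}^{\alpha;\psi}$ to both sides of \eqref{11}, the left-hand side becomes $I_{a^{+}}^{\alpha;\psi}D_{a^{+}}^{\alpha,\beta;\psi}u(t)$, which by the particular case ($0<\alpha<1$) of Theorem \ref{r1} equals
\begin{equation*}
u(t)-\frac{(\psi(t)-\psi(a))^{\gamma-1}}{\Gamma(\gamma)}I_{a^{+}}^{(1-\beta)(1-\alpha);\psi}u(a).
\end{equation*}
Here I must justify that Theorem \ref{r1} applies: since $u \in C_{1-\gamma;\psi}^{\gamma}[a,b]$, the hypothesis $I_{a^{+}}^{1-\gamma;\psi}u \in C_{1-\gamma;\psi}^{1}[a,b]$ follows from the definition \eqref{a1} of $C_{1-\gamma;\psi}^{\gamma}[a,b]$. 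Now $(1-\beta)(1-\alpha) = 1-\gamma$ because $\gamma = \alpha+\beta-\alpha\beta$ gives $1-\gamma = (1-\alpha)(1-\beta)$, so the boundary term is exactly $\frac{(\psi(t)-\psi(a))^{\gamma-1}}{\Gamma(\gamma)}I_{a^{+}}^{1-\gamma;\psi}u(a) = \frac{u_a}{\Gamma(\gamma)}(\psi(t)-\psi(a))^{\gamma-1}$ by the initial condition \eqref{12}. Rearranging and writing out $I_{a^{+}}^{\alpha;\psi}f(s,u_s,D_{a^{+}}^{\alpha,\beta;\psi}u_s)$ via \eqref{a2} yields \eqref{y}. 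I should also check that $f(\cdot,u_\cdot,D_{a^{+}}^{\alpha,\beta;\psi}u_\cdot) \in C_{1-\gamma;\psi}[a,b]$ so that the integral makes sense: this uses (A1) together with $u \in C_{1-\gamma;\psi}^{\gamma}[a,b] \subset C_{1-\gamma;\psi}^{\alpha,\beta}[a,b]$ (the inclusion noted just before Section \ref{kk}), which guarantees $D_{a^{+}}^{\alpha,\beta;\psi}u \in C_{1-\gamma;\psi}[a,b]$.

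Conversely, suppose $u \in C_{1-\gamma;\psi}^{\gamma}[a,b]$ satisfies \eqref{y}. I would apply $D_{a^{+}}^{\alpha,\beta;\psi}$ to both sides. By Lemma \ref{r2}, $D_{a^{+}}^{\alpha;\psi}(\psi(t)-\psi(a))^{\alpha-1}=0$; more precisely one needs $D_{a^{+}}^{\alpha,\beta;\psi}(\psi(t)-\psi(a))^{\gamma-1}=0$, which follows since $D_{a^{+}}^{\alpha,\beta;\psi} = I_{a^{+}}^{\beta(1-\alpha);\psi}D_{a^{+}}^{\gamma;\psi}$ and $D_{a^{+}}^{\gamma;\psi}(\psi(t)-\psi(a))^{\gamma-1}=0$ again by Lemma \ref{r2}. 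For the integral term, which is $I_{a^{+}}^{\alpha;\psi}f(\cdot,u_\cdot,D_{a^{+}}^{\alpha,\beta;\psi}u_\cdot)$, Lemma \ref{a5} gives $D_{a^{+}}^{\alpha,\beta;\psi}I_{a^{+}}^{\alpha;\psi}f = f$, using that $f(\cdot,u_\cdot,D_{a^{+}}^{\alpha,\beta;\psi}u_\cdot) \in C_{1-\gamma;\psi}[a,b]$. This recovers \eqref{11}. To recover the initial condition \eqref{12}, I would apply $I_{a^{+}}^{1-\gamma;\psi}$ to \eqref{y}: by Lemma \ref{r2}, $I_{a^{+}}^{1-\gamma;\psi}(\psi(t)-\psi(a))^{\gamma-1} = \Gamma(\gamma)(\psi(t)-\psi(a))^{0}/\Gamma(1) = \Gamma(\gamma)$, while the second term becomes $I_{a^{+}}^{1-\gamma;\psi}I_{a^{+}}^{\alpha;\psi}f = I_{a^{+}}^{1-\gamma+\alpha;\psi}f$ by Lemma \ref{a4}; since $1-\gamma+\alpha > 0$ and, more to the point, $1-\gamma+\alpha$ exceeds the weight $1-\gamma$, Lemma \ref{z2} shows this term tends to $0$ as $t \to a^{+}$. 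Evaluating at $t=a$ leaves $I_{a^{+}}^{1-\gamma;\psi}u(a) = \frac{u_a}{\Gamma(\gamma)}\cdot\Gamma(\gamma) = u_a$, which is \eqref{12}.

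The main obstacle I anticipate is the careful bookkeeping of which function space each identity requires and verifying those hypotheses hold under the standing assumption $u \in C_{1-\gamma;\psi}^{\gamma}[a,b]$ together with (A1) — in particular, confirming that the composed function $s \mapsto f(s,u(s),D_{a^{+}}^{\alpha,\beta;\psi}u(s))$ lies in $C_{1-\gamma;\psi}[a,b]$, which is what legitimizes the use of Lemmas \ref{a4}, \ref{a5}, and \ref{z2}. A secondary subtlety is the identity $1-\gamma = (1-\alpha)(1-\beta)$ and its consequence $\gamma - 1 = -(1-\beta)(1-\alpha)$, which must be used to match the boundary term produced by Theorem \ref{r1} with the prescribed data; and one should remember that Lemma \ref{z2}'s conclusion $I_{a^{+}}^{\alpha;\psi}h(a)=0$ requires the order to strictly exceed the weight exponent, a condition easily checked here since $\alpha > 0$ forces $1-\gamma+\alpha > 1-\gamma$.
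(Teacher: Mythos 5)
Your proposal is correct, and the forward direction coincides with the paper's: both reduce $I_{a^{+}}^{\alpha;\psi}D_{a^{+}}^{\alpha,\beta;\psi}u$ to $u$ minus the boundary term $\frac{u_a}{\Gamma(\gamma)}(\psi(t)-\psi(a))^{\gamma-1}$ via Theorem \ref{r1}. One citation-level remark there: the ``in particular, $0<\alpha<1$'' formula of Theorem \ref{r1} is stated for $h\in C^{n}[a,b]$, whereas $u$ is only in the weighted space; the clean route is the ``Moreover'' part of Theorem \ref{r1} combined with identity \eqref{m1} of Lemma \ref{r3}, which is precisely the hypothesis you verify (and, since $(1-\beta)(1-\alpha)=1-\gamma$, yields the same boundary term), so nothing is actually lost. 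The converse is where you genuinely diverge from the paper. You apply $D_{a^{+}}^{\alpha,\beta;\psi}$ to \eqref{y} in one step, annihilating $(\psi(t)-\psi(a))^{\gamma-1}$ through $D_{a^{+}}^{\gamma;\psi}(\psi(t)-\psi(a))^{\gamma-1}=0$ and recovering $f$ from the integral term by Lemma \ref{a5}. The paper instead factors $D_{a^{+}}^{\alpha,\beta;\psi}=I_{a^{+}}^{\beta(1-\alpha);\psi}D_{a^{+}}^{\gamma;\psi}$: it first applies $D_{a^{+}}^{\gamma;\psi}$ (using \eqref{m2}) to obtain $D_{a^{+}}^{\gamma;\psi}u=D_{a^{+}}^{\beta(1-\alpha);\psi}f$, and then applies $I_{a^{+}}^{\beta(1-\alpha);\psi}$, invoking Theorem \ref{r1} and Lemma \ref{z2} to discard the resulting boundary term. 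Your route is shorter and avoids the intermediate regularity bookkeeping (showing $I_{a^{+}}^{1-\beta(1-\alpha);\psi}f\in C^{1}_{1-\gamma;\psi}[a,b]$), at the price of leaning on Lemma \ref{a5} as a black box; the paper's two-step route has the side benefit of explicitly exhibiting $D_{a^{+}}^{\gamma;\psi}u\in C_{1-\gamma;\psi}[a,b]$, i.e.\ confirming along the way that the candidate solution really lies in $C_{1-\gamma;\psi}^{\gamma}[a,b]$. Your recovery of the initial condition (apply $I_{a^{+}}^{1-\gamma;\psi}$ to \eqref{y}, use Lemma \ref{r2} and the semigroup property, then Lemma \ref{z2} to kill $I_{a^{+}}^{1-\gamma+\alpha;\psi}f$ at $t=a$) is identical to the paper's.
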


\begin{proof}
($\Rightarrow $) Let $u\in C_{1-\gamma ;\psi }^{\gamma }[a,b]$ be a solution
of Cauchy-type problem \eqref{11})--(\eqref{12}. We show that $u$ is also a solution of \eqref{y}.
By the definition of $C_{1-\gamma ;\psi }^{\gamma }[a,b],$\ Lemma \ref{A4}%
(ii), and Definition \ref{M1}, we have%
\begin{equation*}
I_{a^{+}}^{1-\gamma ;\psi }u\in C[a,b]\text{ and }D_{a^{+}}^{\gamma ;\psi }u=%
\left[ \frac{1}{\psi ^{\prime }(t)}\frac{d}{dt}\right] I_{a^{+}}^{1-\gamma
;\psi }u\in C_{1-\gamma ;\psi }[a,b],
\end{equation*}
Since $\psi \in C^{1}[a,b]$ and by definition of $C_{\gamma ;\psi
}^{n}[a,b] $, clearly $I_{a^{+}}^{1-\gamma ;\psi }u\in C_{1-\gamma ;\psi
}^{1}[a,b].$ Hence by using Lemma \ref{r1} and initial condition (\ref{12}), for $t\in (a,b]$
we have%
\begin{eqnarray}
I_{a^{+}}^{\alpha ;\psi }D_{a^{+}}^{\gamma ;\psi }u(t) &=&u(t)-\frac{\left[
\psi (t)-\psi (a)\right] ^{\gamma -1}}{\Gamma (\gamma )}I_{a^{+}}^{1-\gamma
;\psi }u(a)  \notag \\
&=&u(t)-\frac{u_{a}}{\Gamma (\gamma )}\left[ \psi (t)-\psi (a)\right]
^{\gamma -1},  \label{r4}
\end{eqnarray}%
From the fact that, $D_{a^{+}}^{\gamma ;\psi }u\in
C_{1-\gamma ;\psi }[a,b],$ (\ref{y1}), Lemma \ref{r3} and equation \eqref{11}, we
have%
\begin{eqnarray}
I_{a^{+}}^{\gamma ;\psi }D_{a^{+}}^{\gamma ;\psi }u(t) &=&I_{a^{+}}^{\alpha
;\psi }D_{a^{+}}^{\alpha ,\beta ;\psi }u(t)  \notag \\
&=&I_{a^{+}}^{\alpha ;\psi }f(t,u(t),D_{a^{+}}^{\alpha ,\beta ;\psi }u(t))
\label{r5}
\end{eqnarray}
Comparing equations (\ref{r4}) and (\ref{r5}), we obtain the desired integral equation \eqref{y}:
\begin{equation*}
u(t)=\frac{u_{a}}{\Gamma (\gamma )}(\psi (t)-\psi (a))^{\gamma
-1}+I_{a^{+}}^{\alpha ;\psi }f(t,u(t),D_{a^{+}}^{\alpha ,\beta ;\psi }u(t)).
\end{equation*}%

($\Leftarrow $) Assume that $u\in C_{1-\gamma ;\psi }^{\gamma }[a,b]$
satisfies the Volterra integral equation (\ref{y}). We prove that $u$
also satisfies the Cauchy-type problem (\ref{11})--(%
\ref{12}). Applying $D_{a^{+}}^{\gamma
;\psi }$ on both sides of equation \eqref{r2}, in view of Lemmas \ref{r3} and \ref{r2}, we have%
\begin{eqnarray}
D_{a^{+}}^{\gamma ;\psi }u(t) &=&D_{a^{+}}^{\gamma ;\psi }\left[ \frac{u_{a}%
}{\Gamma (\gamma )}(\psi (t)-\psi (a))^{\gamma -1}+I_{a^{+}}^{\alpha ;\psi
}f(t,u(t),D_{a^{+}}^{\alpha ,\beta ;\psi }u(t))\right]  \notag \\
&=&D_{a^{+}}^{\gamma ;\psi }\left[ \frac{u_{a}}{\Gamma (\gamma )}(\psi
(t)-\psi (a))^{\gamma -1}\right] +D_{a^{+}}^{\gamma ;\psi }I_{a^{+}}^{\alpha
;\psi }f(t,u(t),D_{a^{+}}^{\alpha ,\beta ;\psi }u(t))  \notag \\
&=&D_{a^{+}}^{\beta (1-\alpha );\psi }f(t,u(t),D_{a^{+}}^{\alpha ,\beta
;\psi }u(t)).  \label{s8}
\end{eqnarray}
Since $D_{a^{+}}^{\gamma ;\psi }u\in C_{1-\gamma ;\psi }[a,b],$
equation (\ref{s8}) implies that
\begin{equation*}
D_{a^{+}}^{\gamma ;\psi }u(t)=\left[ \frac{1}{\psi ^{\prime }(t)}\frac{d}{dt}%
\right] I_{a^{+}}^{1-\beta (1-\alpha );\psi }f(t,u(t),D_{a^{+}}^{\alpha
,\beta ;\psi }u(t))\in C_{1-\gamma ;\psi }[a,b].
\end{equation*}
As $f(\cdot ,u(\cdot ),D_{a^{+}}^{\alpha ,\beta ;\psi }u(\cdot ))\in
C_{1-\gamma ;\psi }[a,b],$ and from Lemma \ref{A4}(ii), follows
\begin{equation}
I_{a^{+}}^{1-\beta (1-\alpha );\psi }f(\cdot ,u(\cdot ),D_{a^{+}}^{\alpha
,\beta ;\psi }u(\cdot ))\in C_{1-\gamma ;\psi }[a,b].  \label{S7}
\end{equation}%
By the definition of $C_{\gamma ;\psi }^{n}[a,b]$, equation \eqref{S7} means
\begin{equation*}
I_{a^{+}}^{1-\beta (1-\alpha );\psi }f(\cdot ,u(\cdot ),D_{a^{+}}^{\alpha
,\beta ;\psi }u(\cdot ))\in C_{1-\gamma ;\psi }^{1}[a,b].
\end{equation*}
Now apply $I_{a^{+}}^{\beta (1-\alpha );\psi }$ on both sides of equation (\ref{s8}), in view of Lemmas \ref{r1} and \ref{z2}, we
have%
\begin{eqnarray}
&&I_{a^{+}}^{\beta (1-\alpha );\psi }D_{a^{+}}^{\gamma ;\psi }u(t)  \notag \\
&=&I_{a^{+}}^{\beta (1-\alpha );\psi }D_{a^{+}}^{\beta (1-\alpha );\psi
}f(t,u(t),D_{a^{+}}^{\alpha ,\beta ;\psi }u(t))  \notag \\
&=&f(t,u(t),D_{a^{+}}^{\gamma ;\psi }u(t))-\frac{I_{a^{+}}^{1-\beta
(1-\alpha );\psi }f(t,u(t),D_{a^{+}}^{\alpha ,\beta ;\psi }u(t))\mid _{t=a}}{%
\Gamma (\beta (1-\alpha ))}\left[ \psi (t)-\psi (a)\right] ^{\beta (1-\alpha
)-1}  \notag \\
&=&f(t,u(t),D_{a^{+}}^{\alpha ,\beta ;\psi }u(t)).  \label{Z3}
\end{eqnarray}
Comparing equation (\ref{Z3}) and equation (\ref{z1}), we obtain the differential equtions \eqref{11}:
\begin{equation*}
D_{a^{+}}^{\alpha ,\beta ;\psi }u(t)=f(t,u(t),D_{a^{+}}^{\alpha ,\beta ;\psi
}u(t)).
\end{equation*}%

Now we show that $u\in C_{1-\gamma ;\psi }^{\gamma }[a,b]$ given by (\ref{y}) also satisfies the initial condition (\ref{12}). To this
end, multiplying both sides of equation (\ref{y}) by $I_{a^{+}}^{1-\gamma ;\psi
},$ use Lemma \ref{A4}(iii) and Lemma \ref{r2}, we have%
\begin{eqnarray}
I_{a^{+}}^{1-\gamma ;\psi }u(t) &=&I_{a^{+}}^{1-\gamma ;\psi }\left[ \frac{%
u_{a}}{\Gamma (\gamma )}(\psi (t)-\psi (a))^{\gamma -1}+I_{a^{+}}^{\alpha
;\psi }f(t,u(t),D_{a^{+}}^{\alpha ,\beta ;\psi }u(t))\right]  \notag \\
&=&I_{a^{+}}^{1-\gamma ;\psi }\left[ \frac{u_{a}}{\Gamma (\gamma )}(\psi
(t)-\psi (a))^{\gamma -1}\right] +I_{a^{+}}^{1-\gamma ;\psi
}I_{a^{+}}^{\alpha ;\psi }f(t,u(t),D_{a^{+}}^{\alpha ,\beta ;\psi }u(t))
\notag \\
&=&u_{a}+I_{a^{+}}^{1-\beta (1-\alpha );\psi }f(t,u(t),D_{a^{+}}^{\alpha
,\beta ;\psi }u(t))  \label{z4}
\end{eqnarray}
Taking limit $t\rightarrow a$ in equation (\ref{z4}), by Lemma \ref{z2}%
, we conclude that $I_{a^{+}}^{1-\gamma ;\psi }u(a)=u_{a}.$ This
completes the proof.
\end{proof}

Next, we prove the existence and uniqueness of solution for the Cauchy
problem (\ref{11})--(\ref{12}) in the weighted space $C_{1-\gamma ;\psi
}^{\alpha ,\beta }[a,b]$ by using the Banach fixed point theorem.

\begin{theorem}
\label{BB} Let $0<\alpha <1,$ $0\leq \beta \leq 1$ and $\gamma =\alpha
+\beta -\alpha \beta .$ Assume that the hypotheses (A1) and (A2) are
fulfilled. Then there exists a unique solution $u$ for the Cauchy type
problem (\ref{11})--(\ref{12}) in the space $C_{1-\gamma ;\psi }^{\alpha
,\beta }[a,b].$
\end{theorem}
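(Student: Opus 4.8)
The plan is to combine Lemma \ref{AA} with the Banach fixed point theorem (Theorem \ref{e}): since a function in $C_{1-\gamma;\psi}^{\gamma}[a,b]$ solves \eqref{11}--\eqref{12} iff it solves the Volterra equation \eqref{y}, it suffices to produce a unique fixed point of the operator attached to the right-hand side of \eqref{y} and then to check it has the required regularity. The first thing to deal with is that \eqref{11} is \emph{implicit}: for an arbitrary $u\in C_{1-\gamma;\psi}[a,b]$ the quantity $D_{a^{+}}^{\alpha,\beta;\psi}u$ need not even exist, so $f(t,u(t),D_{a^{+}}^{\alpha,\beta;\psi}u(t))$ must be replaced by an auxiliary function. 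Given $u\in C_{1-\gamma;\psi}[a,b]$, I would apply Theorem \ref{e} to the map $w\mapsto f(\cdot,u(\cdot),w(\cdot))$: by (A1) it sends $C_{1-\gamma;\psi}[a,b]$ into itself and by the second half of (A2) it is a contraction with constant $M^{\ast}<1$, so it has a unique fixed point $g_{u}\in C_{1-\gamma;\psi}[a,b]$ with $g_{u}(t)=f(t,u(t),g_{u}(t))$. Subtracting the defining relations for $g_{u_{1}}$ and $g_{u_{2}}$ and using (A2) once more yields
\begin{equation*}
\left\Vert g_{u_{1}}-g_{u_{2}}\right\Vert_{C_{1-\gamma;\psi}[a,b]}\le \frac{M}{1-M^{\ast}}\left\Vert u_{1}-u_{2}\right\Vert_{C_{1-\gamma;\psi}[a,b]}.
\end{equation*}

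Next I would define $T:C_{1-\gamma;\psi}[a,b]\to C_{1-\gamma;\psi}[a,b]$ by
\begin{equation*}
(Tu)(t)=\frac{u_{a}}{\Gamma(\gamma)}(\psi(t)-\psi(a))^{\gamma-1}+I_{a^{+}}^{\alpha;\psi}g_{u}(t).
\end{equation*}
That $Tu\in C_{1-\gamma;\psi}[a,b]$ holds because $(\psi(\cdot)-\psi(a))^{\gamma-1}\in C_{1-\gamma;\psi}[a,b]$ and, by Lemma \ref{A4}(ii), $I_{a^{+}}^{\alpha;\psi}$ maps $C_{1-\gamma;\psi}[a,b]$ into itself. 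I would also check $Tu\in C_{1-\gamma;\psi}^{\alpha,\beta}[a,b]$: by Lemma \ref{r2}, $D_{a^{+}}^{\gamma;\psi}(\psi(\cdot)-\psi(a))^{\gamma-1}=0$, hence $D_{a^{+}}^{\alpha,\beta;\psi}(\psi(\cdot)-\psi(a))^{\gamma-1}=I_{a^{+}}^{\beta(1-\alpha);\psi}D_{a^{+}}^{\gamma;\psi}(\psi(\cdot)-\psi(a))^{\gamma-1}=0$, while Lemma \ref{a5} gives $D_{a^{+}}^{\alpha,\beta;\psi}I_{a^{+}}^{\alpha;\psi}g_{u}=g_{u}$; thus $D_{a^{+}}^{\alpha,\beta;\psi}(Tu)=g_{u}\in C_{1-\gamma;\psi}[a,b]$.

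For the contraction estimate, from $I_{a^{+}}^{\alpha;\psi}(\psi(t)-\psi(a))^{\gamma-1}=\tfrac{\Gamma(\gamma)}{\Gamma(\gamma+\alpha)}(\psi(t)-\psi(a))^{\gamma+\alpha-1}$ (Lemma \ref{r2}) one obtains $\left\Vert I_{a^{+}}^{\alpha;\psi}h\right\Vert_{C_{1-\gamma;\psi}[a,b]}\le\tfrac{\Gamma(\gamma)(\psi(b)-\psi(a))^{\alpha}}{\Gamma(\gamma+\alpha)}\left\Vert h\right\Vert_{C_{1-\gamma;\psi}[a,b]}$ for $h\in C_{1-\gamma;\psi}[a,b]$; combined with the first displayed inequality this gives
\begin{equation*}
\left\Vert Tu_{1}-Tu_{2}\right\Vert_{C_{1-\gamma;\psi}[a,b]}\le \frac{M\,\Gamma(\gamma)(\psi(b)-\psi(a))^{\alpha}}{(1-M^{\ast})\,\Gamma(\gamma+\alpha)}\left\Vert u_{1}-u_{2}\right\Vert_{C_{1-\gamma;\psi}[a,b]}.
\end{equation*}
When the constant here is $<1$, Theorem \ref{e} yields a unique fixed point $u^{\ast}\in C_{1-\gamma;\psi}[a,b]$; by the previous paragraph $u^{\ast}=Tu^{\ast}\in C_{1-\gamma;\psi}^{\alpha,\beta}[a,b]$ with $D_{a^{+}}^{\alpha,\beta;\psi}u^{\ast}=g_{u^{\ast}}$, so $D_{a^{+}}^{\alpha,\beta;\psi}u^{\ast}=f(\cdot,u^{\ast},g_{u^{\ast}})=f(\cdot,u^{\ast},D_{a^{+}}^{\alpha,\beta;\psi}u^{\ast})$ and $u^{\ast}$ satisfies \eqref{y}; running the converse argument of Lemma \ref{AA} (which applies since $D_{a^{+}}^{\alpha,\beta;\psi}u^{\ast}\in C_{1-\gamma;\psi}[a,b]$) shows $u^{\ast}$ solves \eqref{11}--\eqref{12}, and uniqueness in $C_{1-\gamma;\psi}^{\alpha,\beta}[a,b]$ follows from uniqueness of the fixed point together with Lemma \ref{AA}.

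The step I expect to be the real obstacle is making $T$ genuinely a contraction on the whole of $[a,b]$: the estimate above is a contraction only when $\tfrac{M\Gamma(\gamma)(\psi(b)-\psi(a))^{\alpha}}{(1-M^{\ast})\Gamma(\gamma+\alpha)}<1$, a smallness restriction on $\psi(b)-\psi(a)$. To remove it I would partition $[a,b]$ into finitely many pieces $a=t_{0}<t_{1}<\dots<t_{N}=b$ on each of which the corresponding constant is $<1$, solve the problem successively on $[t_{0},t_{1}],[t_{1},t_{2}],\dots$ (absorbing the already-determined part of the integral into the inhomogeneous term at each stage), and splice the pieces into one function on $[a,b]$ by means of Lemma \ref{ff}, finally verifying the spliced function belongs to $C_{1-\gamma;\psi}^{\alpha,\beta}[a,b]$; alternatively one simply records the smallness of the contraction constant among the hypotheses.
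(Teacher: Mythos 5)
Your proposal is correct and follows essentially the same route as the paper: reduce to the Volterra equation via Lemma \ref{AA}, run Banach's fixed point theorem for the operator $Tu=u_{0}+I_{a^{+}}^{\alpha;\psi}F_{u}$ on a finite partition $a=t_{0}<t_{1}<\dots<t_{N}=b$ chosen so that $\frac{\Gamma(\gamma)(\psi(t_{k+1})-\psi(t_{k}))^{\alpha}}{\Gamma(\gamma+\alpha)}\frac{M}{1-M^{\ast}}<1$ on each piece, splice the local solutions with Lemma \ref{ff}, and finally check $D_{a^{+}}^{\alpha,\beta;\psi}u^{\ast}\in C_{1-\gamma;\psi}[a,b]$. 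Two of your sub-steps are actually tidier than the paper's. First, you construct the auxiliary function $g_{u}$ (the paper's $F_{u}$) as the fixed point of the contraction $w\mapsto f(\cdot,u,w)$ with constant $M^{\ast}<1$; the paper merely writes down the self-referential identity \eqref{D3} without establishing that such an $F_{u}$ exists, and then uses it in estimate \eqref{D5} in a form ($F_{u}=f(\cdot,u,F_{u})$) that matches your definition rather than \eqref{D3}. Second, you obtain the regularity $D_{a^{+}}^{\alpha,\beta;\psi}(Tu)=g_{u}$ directly from Lemmas \ref{a5} and \ref{r2}, whereas the paper recovers it only in the limit of the Picard iterates via the Lipschitz estimate; both work, but yours is more immediate. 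The one caveat (shared with the paper, so not a gap you introduced) is that uniqueness in all of $C_{1-\gamma;\psi}^{\alpha,\beta}[a,b]$ formally needs the equivalence of Lemma \ref{AA} for solutions in that space, while the lemma is stated only on the smaller space $C_{1-\gamma;\psi}^{\gamma}[a,b]$.
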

\begin{proof}
Let us rewrite Cauchy-type problem \eqref{11}-\eqref{12} as:
\begin{equation*}
D_{a^{+}}^{\alpha ,\beta ;\psi }u(t)=F_{u}(t),\text{ \ }I_{a^{+}}^{1-\gamma
;\psi }u(a)=u_{a},\text{ \ }\gamma =\alpha +\beta -\alpha \beta ,
\end{equation*}%
then by Lemma \ref{AA},%
\begin{equation}
u(t)=u_{0}(t)+\frac{1}{\Gamma (\alpha )}\int_{a}^{t}\psi ^{\prime }(s)(\psi
(t)-\psi (s))^{\alpha -1}F_{u}(s)ds,\ \ t>a,
\label{A22}
\end{equation}%
where
\begin{equation}
u_{0}(t)=\frac{u_{a}}{\Gamma (\gamma )}(\psi (t)-\psi (a))^{\gamma -1}
\label{A2}
\end{equation}%
and
\begin{equation}
F_{u}(t)=f(t,u_{0}(t)+I_{a^{+}}^{\alpha ;\psi }F_{u}(t),F_{u}(t)).
\label{D3}
\end{equation}%
That is
\begin{equation}
u(t)=u_{0}(t)+I_{a^{+}}^{\alpha ;\psi }F_{u}(t).  \label{D4}
\end{equation}
To start with, we partition the interval $(a,b]$ into $N$
subintervals namely, $(t_{0},t_{1}],[t_{1},t_{2}],...[t_{N-1},t_{N}]$ on which the operator $T$ is a contraction mapping
on all subintervals, where $a=t_{0}<t_{1}<...<t_{N}=b$.

Let $C_{1-\gamma ;\psi }[a,b]$ is a complete metric space with the metric
\begin{equation*}
d(u_{1},u_{2})=\left\Vert u_{1}-u_{2}\right\Vert _{C_{1-\gamma ;\psi }[a,b]}=%
\underset{t\in \lbrack a,b]}{\max }\left[ (\psi (t)-\psi (a)\right]
^{1-\gamma }\left\vert u_{1}(t)-u_{2}(t)\right\vert
\end{equation*}%
for $u_{1},u_{2}\in C_{1-\gamma ;\psi }[a,b]$ and consider the operator $%
T:C_{1-\gamma ;\psi }[a,b]\rightarrow C_{1-\gamma ;\psi }[a,b]$ defined by%
\begin{equation}
(Tu)(t)=u_{0}(t)+\frac{1}{\Gamma (\alpha )}\int_{a}^{t}\psi ^{\prime
}(s)(\psi (t)-\psi (s))^{\alpha -1}F_{u}(s)ds,\ \ t\in \lbrack a,b].
\label{A3}
\end{equation}%
Now, on the interval $(a,t_{1}]$ note that $\ C_{1-\gamma ;\psi }([a,t_{1}]$
is a complete metric space with the metric%
\begin{equation*}
d(u_{1},u_{2})=\left\Vert u_{1}-u_{2}\right\Vert _{C_{1-\gamma ;\psi
}[a,t_{1}]}=\underset{t\in \lbrack a,t_{1}]}{\max }\left[ (\psi (t)-\psi (a)%
\right] ^{1-\gamma }\left\vert u_{1}(t)-u_{2}(t)\right\vert ,
\end{equation*}%
for all $u_{1},u_{2}\in C_{1-\gamma ;\psi }[a,t_{1}].$ Choose $t_{1}\in
(a,b] $ such that%
\begin{equation}
\frac{\Gamma (\gamma )\left[ (\psi (t_{1})-\psi (a)\right] ^{\alpha }}{%
\Gamma (\gamma +\alpha )}\frac{M}{1-M^{\ast }}<1  \label{t1}
\end{equation}%
holds, where $M$ and $M^{\ast }$ are defined as in the condition (A2).

Next, we show that $Tu(t)\in C_{1-\gamma ;\psi }[a,t_{1}]$ for all $t\in
(a,t_{1}].$ From equation (\ref{A2}) it follows that $u_{0}(t)\in C_{1-\gamma
;\psi }[a,t_{1}]$ since $\left[ (\psi (t)-\psi (a)\right] ^{1-\gamma
}u_{0}(t)=\frac{u_{a}}{\Gamma (\gamma )}\in C[a,t_{1}]$.\ In view of Lemma %
\ref{A4}(ii), then $I^{\alpha ;\psi }f$ is bounded from $C_{1-\gamma ;\psi
}[a,t_{1}]$ into $C_{1-\gamma ;\psi }[a,t_{1}]$ which implies $Tu\in
C_{1-\gamma ;\psi }[a,t_{1}],$ i.e. $T:C_{1-\gamma ;\psi
}[a,t_{1}]\rightarrow C_{1-\gamma ;\psi }[a,t_{1}]$.

Now, we prove that $T$ has a fixed point in $C_{1-\gamma ;\psi }[a,t_{1}]$
which is the unique solution to (\ref{11})-(\ref{12}) on $(a,t_{1}].$ To
this end, it is enough to show that the operator $T$ is a contraction map.
Indeed, from equation (\ref{A3}), assumption (A2), Lemma's \ref{r2}, \ref{A4}, and
for any $u_{1},u_{2}\in C_{1-\gamma ;\psi }[a,t_{1}],$ we have%
\begin{align}
|{[\psi (t)-\psi (a)]}^{1-\gamma }(Tu_{1})(t)&-{[\psi (t)-\psi (a)]}^{1-\gamma}(Tu_{2})(t)| \notag \\
&=\left\vert \left[ (\psi (t)-\psi (a)\right] ^{1-\gamma }I_{a^{+}}^{\alpha
;\psi }(F_{u_{1}}(t)-F_{u_{2}}(t))\right\vert  \notag \\
&=\left[ (\psi (t)-\psi (a)\right] ^{1-\gamma }\left\vert I_{a^{+}}^{\alpha
;\psi }\left[ (\psi (t)-\psi (a)\right] ^{\gamma -1}\right\vert  \notag \\
&\times \left[ (\psi (t)-\psi (a)\right] ^{1-\gamma }\left\vert
F_{u_{1}}(t)-F_{u_{2}}(t)\right\vert  \notag \\
&\leq \left\Vert I_{a^{+}}^{\alpha ;\psi }\left[ (\psi (t)-\psi (a)\right]
^{\gamma -1}\right\Vert _{C_{1-\gamma ;\psi }[a,t_{1}]}\left\Vert
F_{u_{1}}(t)-F_{u_{2}}(t)\right\Vert _{C_{1-\gamma ;\psi }[a,t_{1}]}  \notag
\\
&\leq \frac{\Gamma (\gamma )\left[ (\psi (t_{1})-\psi (a)\right] ^{\alpha }%
}{\Gamma (\gamma +\alpha )}\frac{\left[ (\psi (t_{1})-\psi (a)\right]
^{\gamma }}{(\psi (t_{1})-\psi (a)}\left\Vert
F_{u_{1}}(t)-F_{u_{2}}(t)\right\Vert _{C_{1-\gamma ;\psi }[a,t_{1}]} \label{d2}
\end{align}%
and
\begin{align}\label{D5}
{\| F_{u_{1}}(t)-F_{u_{2}}(t)\|}_{C_{1-\gamma;\psi}[a,t_{1}]}=&{\|f(t,u_{1}(t),F_{u_{1}}(t))-f(t,u_{2}(t),F_{u_{2}}(t))\|}_{C_{1-\gamma;\psi}[a,t_{1}]} \nonumber\\
&\leq M{\| u_{1}-u_{2}\|}_{C_{1-\gamma ;\psi}[a,t_{1}]}+M^{\ast}{F_{u_{1}}(t)-F_{u_{2}}(t)\|}_{C_{1-\gamma ;\psi}[a,t_{1}]}\nonumber\\
&\leq \frac{M}{1-M^{\ast}}{\|u_{1}-u_{2}\|}_{C_{1-\gamma ;\psi}[a,t_{1}]}.
\end{align}
Further by \eqref{d2} and \eqref{D5}, we get%
\begin{eqnarray*}
\left\Vert Tu_{1}-Tu_{2}\right\Vert _{_{C_{1-\gamma ;\psi }[a,t_{1}]}}
\leq \frac{\Gamma (\gamma )\left[ (\psi (t_{1})-\psi (a)\right] ^{\alpha }%
}{\Gamma (\gamma +\alpha )}\frac{\left[ (\psi (t_{1})-\psi (a)\right]
^{\gamma }}{(\psi (t_{1})-\psi (a)}\frac{M}{1-M^{\ast }}\left\Vert
u_{1}-u_{2}\right\Vert _{C_{1-\gamma ;\psi }[a,t_{1}]}.
\end{eqnarray*}

Since $\frac{\left[ (\psi (t_{1})-\psi (a)\right] ^{\gamma }}{(\psi
(t_{1})-\psi (a)}<1$ and by condition \eqref{t1}, we conclude that $T$
is a contraction map. By the Banach fixed point theorem, we can deduce that $T$
has a fixed point $u^{\ast }\in C_{1-\gamma ;\psi }[a,t_{1}]$, which is just
the unique solution to the integral equation \eqref{A22} on $(a,t_{1}]$.
This solution $u^{\ast }$ is obtained as a limit of a convergent sequence $
(T^{k}u_{0}^{\ast })_{k\in
\mathbb{N}
}$ as follows:
\begin{equation}
\left\Vert T^{k}u_{0}^{\ast }-u^{\ast }\right\Vert _{_{C_{1-\gamma ;\psi
}[a,t_{1}]}}\rightarrow 0\text{ as }k\rightarrow \infty.  \label{z6}
\end{equation}%
For any function $u_{0}^{\ast }\in C_{1-\gamma ;\psi }[a,t_{1}]$ and%
\begin{equation}
T^{k}u_{0}^{\ast }(t)=TT^{k-1}u_{0}^{\ast }(t)=u_{0}(t)+I_{a^{+}}^{\alpha
;\psi }F_{T^{k-1}u_{0}^{\ast }}(t),\text{ }k\in
\mathbb{N}
.  \label{z7}
\end{equation}
Set $u_{0}^{\ast }(t):=u_{0}(t)$ such that $u_{0}(t)$ is defined by \eqref{A2} and we consider%
\begin{equation}
u_{k}(t)=T^{k}u_{0}^{\ast }(t),\text{ }t\in (a,t_{1}],\text{ }k\in
\mathbb{N}
,  \label{z5}
\end{equation}%
then equation \eqref{z5} leads to%
\begin{equation}
u_{k}(t)=u_{0}(t)+I_{a^{+}}^{\alpha ;\psi }F_{u_{k-1}}(t),\text{ }k\in
\mathbb{N}
.  \label{z8}
\end{equation}
On the other hand, by equations \ref{z7}, \eqref{z5} and \eqref{z8}, limit in \eqref{z6} can be rewritten as%
\begin{equation*}
\left\Vert u_{k}-u^{\ast }\right\Vert _{_{C_{1-\gamma ;\psi
}[a,t_{1}]}}\rightarrow 0,\text{ as }k\rightarrow \infty .
\end{equation*}

If $t_{1}\neq b$ then we consider the interval $[t_{1},b].\ $For $t\in
\lbrack t_{1},b],$ we consider $u\in C[t_{1},b],$ and rewrite integral
equation \eqref{A22} as
\begin{eqnarray}
u(t) &=&Tu(t)=u_{01}(t)+\frac{1}{\Gamma (\alpha )}\int_{t_{1}}^{t}\psi
^{\prime }(s)(\psi (t)-\psi (s))^{\alpha -1}F_{u}(s)ds  \notag \\
&=&u_{01}(t)+I_{t_{1}^{+}}^{\alpha ;\psi }F_{u}(t),  \label{A1}
\end{eqnarray}%
where%
\begin{eqnarray}
u_{01}(t) &=&u_{0}(t)+\frac{1}{\Gamma (\alpha )}\int_{a}^{t_{1}}\psi
^{\prime }(s)(\psi (t)-\psi (s))^{\alpha -1}F_{u}(s)ds  \notag \\
&=&u_{0}(t)+I_{a^{+}}^{\alpha ;\psi }F_{u}(t).  \label{e1}
\end{eqnarray}
Let us choose $t_{2}\in (t_{1},b]$ such that%
\begin{equation}
\frac{\Gamma (\gamma )\left[ (\psi (t_{2})-\psi (t_{1})\right] ^{\alpha }}{%
\Gamma (\gamma +\alpha )}\frac{M}{1-M^{\ast }}<1.  \label{t2}
\end{equation}
As seen above, $u_{01}(t)$ is uniquely defined on $[a,t_{1}],$ and the
integral in equation \eqref{e1} can be considered as a known function i.e. $%
u_{01}(t)\in C_{1-\gamma ;\psi }[t_{1},t_{2}]$. By equation \eqref{A1}, assumption
(A2), Lemma \ref{A4}(i) and relation \eqref{D5}, for $u_{1},u_{2}\in
C[t_{1},t_{2}]$ and $t\in \lbrack t_{1},t_{2}],$ we have%
\begin{align*}
\left\vert (Tu_{1})(t)-(Tu_{2})(t)\right\vert =&\left\vert I_{t_{1}^{+}}^{\alpha ;\psi
}(F_{u_{1}}(t)-F_{u_{2}}(t))\right\vert \\
&\leq \left\Vert I_{t_{1}^{+}}^{\alpha ;\psi
}(F_{u_{1}}(t)-F_{u_{2}}(t))\right\Vert _{C[t_{1},t_{2}]} \\
&\leq \left\Vert I_{t_{1}^{+}}^{\alpha ;\psi }\left\vert
F_{u_{1}}(t)-F_{u_{2}}(t)\right\vert \right\Vert _{C[t_{1},t_{2}]} \\
&\leq \frac{\Gamma (\gamma )\left[ (\psi (t_{2})-\psi (t_{1})\right]
^{\alpha }}{\Gamma (\gamma +\alpha )}\left\Vert
F_{u_{1}}(t)-F_{u_{2}}(t)\right\Vert _{C[t_{1},t_{2}]} \\
&\leq \frac{\Gamma (\gamma )\left[ (\psi (t_{2})-\psi (t_{1})\right]
^{\alpha }}{\Gamma (\gamma +\alpha )}\frac{M}{1-M^{\ast }}\left\Vert
u_{1}-u_{2}\right\Vert _{C[t_{1},t_{2}]}.
\end{align*}
From condition \eqref{t2}, we obtain
\begin{equation*}
\left\Vert Tu_{1}-Tu_{2}\right\Vert _{C[t_{1},t_{2}]}\leq \eta \left\Vert
u_{1}-u_{2}\right\Vert _{C[t_{1},t_{2}]},
\end{equation*}%
where $\eta =\frac{\Gamma (\gamma )\left[ (\psi (t_{2})-\psi (t_{1})\right]
^{\alpha }}{\Gamma (\gamma +\alpha )}\frac{M}{1-M^{\ast }}<1.$ This means
that $T$ is a contraction. Since $F_{u}(t)\in C[t_{1},t_{2}]$ for any
function $u\in C[t_{1},t_{2}],$ then Lemma \ref{A4}(i) yields that $%
I_{t_{1}^{+}}^{\alpha ;\psi }F_{u}(t)\in C[t_{1},t_{2}].$ Thus, $Tu\in
C[t_{1},t_{2}].$ Consequently, the operator $T$ maps $C[t_{1},t_{2}]$ into $%
C[t_{1},t_{2}]$. An application of Banach fixed point theorem shows that
there exists a unique solution $u^{\ast }\in C[t_{1},t_{2}]$ to equation \eqref{A22} on the interval $[t_{1},t_{2}]$. This solution $u^{\ast }$ which is
defined explicitly as a limit of iterations of the mapping $T$. i.e.%
\begin{equation}
\left\Vert T^{k}u_{1}^{\ast }-u^{\ast }\right\Vert
_{_{C[t_{1},t_{2}]}}\rightarrow 0,\text{ as }k\rightarrow \infty .
\label{zz6}
\end{equation}%
Therefore, for any function $u_{1}^{\ast }\in C[t_{1},t_{2}]$ we have%
\begin{eqnarray}
T^{k}u_{1}^{\ast }(t) &=&TT^{k-1}u_{1}^{\ast }(t)  \notag \\
&=&u_{01}(t)+I_{t_{1}^{+}}^{\alpha ;\psi }F_{T^{k-1}u_{1}^{\ast }}(t),\ \
t>t_{1},\text{ }k\in
\mathbb{N}
.  \label{zz7}
\end{eqnarray}
Set $u_{1}^{\ast }(t):=u_{01}(t)$ such that $u_{01}(t)$ is defined by equation \eqref{e1} and we consider%
\begin{equation}
u_{k}(t)=T^{k}u_{1}^{\ast }(t),\text{ }t\in \lbrack t_{1},t_{2}],\text{ }%
k\in
\mathbb{N}
,  \label{zz5}
\end{equation}%
then equation \eqref{zz5} leads to%
\begin{equation}
u_{k}(t)=u_{01}(t)+I_{t_{1}^{+}}^{\alpha ;\psi }F_{u_{k-1}}(t),\ \ t>t_{1},%
\text{ }k\in
\mathbb{N}
.  \label{zz8}
\end{equation}
On the other hand, in the light of \eqref{zz7}, \eqref{zz5} and \eqref{zz8},
equation \eqref{zz6} can be rewritten as%
\begin{equation*}
\left\Vert u_{k}-u^{\ast }\right\Vert _{C[t_{1},t_{2}]}\rightarrow 0,\text{
as }k\rightarrow \infty .
\end{equation*}
Since $u_{0}^{\ast }\in C_{1-\gamma ;\psi }[a,t_{1}]$ and $%
u_{1}^{\ast }\in C[t_{1},t_{2}]$ implies $u_{0}^{\ast }(t_{1})=u_{1}^{\ast
}(t_{1})$. Therefore, if%
\begin{equation*}
u^{\ast }(t)=\left\{
\begin{array}{c}
u_{0}^{\ast }(t),\qquad a<t\leq t_{1}, \\
u_{1}^{\ast }(t),\qquad t_{1}\leq t\leq t_{2},%
\end{array}%
\right.
\end{equation*}%
then by Lemma \ref{ff}, $u^{\ast }\in C_{1-\gamma ;\psi }[a,t_{2}].$ So $%
u^{\ast }$ is the unique solution of equation \eqref{A22} in $C_{1-\gamma ;\psi
}[a,t_{2}]$ on the interval $(a,t_{2}].$

If $t_{2}\neq b,$ we repeat the process $N-2$ times to obtain the unique solution $u_{k}^{\ast }\in
C_{1-\gamma ;\psi }[t_{k},t_{k+1}]$\ for equation \eqref{A22} on the interval $%
[t_{k},t_{k+1}],$ where $a=t_{0}<t_{1}<....<t_{N}=b$ such that
\begin{equation*}
\frac{\Gamma (\gamma )\left[ (\psi (t_{k+1})-\psi (t_{k})\right] ^{\alpha }}{%
\Gamma (\gamma +\alpha )}\frac{M}{1-M^{\ast }}<1.
\end{equation*}
Consequently, we have the unique solution $u^{\ast }\in C_{1-\gamma ;\psi
}[a,b]$ of integral equation (\ref{A22}) given by%
\begin{equation*}
u^{\ast }(t)=u_{k}^{\ast }(t),\qquad t\in (t_{k},t_{k+1}],\text{ }%
k=0,1,...N-1.
\end{equation*}

Finally, we must show that the obtained unique solution $u^{\ast }\in C_{1-\gamma
;\psi }^{\alpha ,\beta }[a,b].$ So, we need to show that $D_{a^{+}}^{\alpha
,\beta ;\psi }u^{\ast }(\cdot )\in C_{1-\gamma ;\psi }[a,b].$ We know that $%
u^{\ast }\in C_{1-\gamma ;\psi }[a,b]$\ is the limit of the sequence $%
u_{k}\in C_{1-\gamma ;\psi }[a,b],$ $k\in
\mathbb{N}
,$ i.e.
\begin{equation}
\left\Vert u_{k}-u^{\ast }\right\Vert _{_{C_{1-\gamma ;\psi
}[a,b]}}\rightarrow 0,\text{ as }k\rightarrow \infty .  \label{s9}
\end{equation}
Therefore, we use fractional differential equation \eqref{11}, Lipschitz
condition (A2) and equation \eqref{s9} to obtain%
\begin{align*}
\| D_{a^{+}}^{\alpha ,\beta ;\psi }u_{k}-D_{a^{+}}^{\alpha,\beta;\psi }u^{\ast }\|_{_{C_{1-\gamma ;\psi }[a,b]}}&=\|f(t,u_{k},F_{u_{k}}(t))-f(t,u^{\ast },F_{u^{\ast}}(t))\|_{_{C_{1-\gamma ;\psi }[a,b]}} \\
&\leq M\|u_{k}-u^{\ast }\|_{C_{1-\gamma ;\psi}[a,b]}+M^{\ast }\|F_{u_{k}}(t)-F_{u^{\ast }}(t)\|_{C_{1-\gamma ;\psi }[a,b]} \\
&\leq [ M+\frac{M}{1-M^{\ast }}] \|u_{k}-u^{\ast}\|_{C_{1-\gamma ;\psi }[a,b]}.
\end{align*}

Taking limit as $k\rightarrow \infty,$ the right-hand side of the
above inequality tends to zero independently. i.e.%
\begin{equation*}
\left\Vert D_{a^{+}}^{\alpha ,\beta ;\psi }u_{k}-D_{a^{+}}^{\alpha ,\beta
;\psi }u^{\ast }\right\Vert _{_{C_{1-\gamma ;\psi }[a,b]}}\rightarrow 0,%
\text{ as }k\rightarrow \infty .
\end{equation*}%
This last expression gives us to $D_{a^{+}}^{\alpha ,\beta ;\psi }u^{\ast
}\in C_{1-\gamma ;\psi }[a,b],$ if $D_{a^{+}}^{\alpha ,\beta ;\psi }u_{k}\in
C_{1-\gamma ;\psi }[a,b]$ $k\in
\mathbb{N}
.$

Since $D_{a^{+}}^{\alpha ,\beta ;\psi
}u_{k}(t)=f(t,u_{k-1}(t),F_{u_{k-1}}(t)),$ then by the preceding argument $f(\cdot ,u^{\ast }(\cdot ),F_{u^{\ast }}(\cdot ))\in
C_{1-\gamma ;\psi }[a,b]$ for any $u^{\ast }\in C_{1-\gamma ;\psi }[a,b].$
Consequently, $u^{\ast }\in C_{1-\gamma ;\psi }^{\alpha ,\beta }[a,b]$ and the proof is complete.
\end{proof}

\section{Continuous dependence} \label{hh}
In this section, we discuss the continuous dependence of solution of an
implicit fractional differential equation involving Hilfer derivative with
respect to another function via the generalized Gronwall inequality.

\begin{theorem}
Let $\psi \in C([a,b],%
\mathbb{R}
)$ a function such that it is increasing and $\psi ^{\prime }(t)\neq 0$, for
all $t\in \lbrack a,b]$, and $f\in C([a,b],%
\mathbb{R}
)$ satisfying Lipschitz condition \eqref{z} on $%
\mathbb{R}
$. Let \ $\alpha >0,$ $\epsilon >0$ such that $0<\alpha -\epsilon <\alpha
\leq 1$ and $0\leq \beta \leq 1$. For any $a\leq t\leq b$, assume that $u$ is
the solution of the Cauchy-type problem (\ref{11})-(\ref{12}) and $u^{\ast }$ is the
solution of the following problem:%
\begin{equation}
D_{a^{+}}^{\alpha -\epsilon ,\beta ;\psi }u^{\ast }(t)=f(t,u^{\ast
}(t),D_{a^{+}}^{\alpha -\epsilon ,\beta ;\psi }u^{\ast }(t)),\text{ \ }%
0<\alpha <1,0\leq \beta \leq 1,t>a,  \label{q1}
\end{equation}%
\begin{equation}
I_{a^{+}}^{1-\gamma -\epsilon (\beta -1);\psi }u^{\ast }(a)=u_{a}^{\ast
}.\qquad \qquad \ \ \qquad \ \gamma =\alpha +\beta -\alpha \beta .
\label{q2}
\end{equation}%
Then, for $a<t\leq b,$%
\begin{align*}
\left\vert u^{\ast }(t)-u(t)\right\vert \leq A(t)+\int_{a}^{t}\left[ \sum_{k=1}^{\infty }\left( \frac{M\Gamma
(\alpha -\epsilon )}{\Gamma (\alpha )(1-M^{\ast })}\right) ^{k}\frac{\psi
^{\prime }(s)(\psi (t)-\psi (s))^{k(\alpha -\epsilon )-1}}{\Gamma (k(\alpha
-\epsilon ))}A(s)\right] ds,
\end{align*}%
where%
\begin{align}\label{t4}
A(t)=&\left\vert \frac{u_{a}^{\ast }(\psi (t)-\psi (a))^{\gamma +\epsilon
(\beta -1)-1}}{\Gamma (\gamma +\epsilon (\beta -1))}-\frac{u_{a}(\psi
(t)-\psi (a))^{\gamma -1}}{\Gamma (\gamma )}\right\vert \\ \nonumber
&+\left\Vert f\right\Vert \left\vert \frac{(\psi (t)-\psi (a))^{\alpha
-\epsilon }}{\Gamma (\alpha -\epsilon +1)}-\frac{(\psi (t)-\psi (a))^{\alpha
-\epsilon }}{\Gamma (\alpha -\epsilon )\Gamma (\alpha )}\right\vert
+\left\Vert f\right\Vert \left\vert \frac{(\psi (t)-\psi (a))^{\alpha
-\epsilon }}{\Gamma (\alpha -\epsilon )\Gamma (\alpha )}-\frac{(\psi
(t)-\psi (a))^{\alpha }}{\Gamma (\alpha +1)}\right\vert,
\end{align}
$\left\Vert f\right\Vert =\underset{t\in \lbrack a,b]}{\max }\left\vert
f(t,u(t),F_{u}(t))\right\vert $ and $F_{u}(t)=D_{a^{+}}^{\alpha ,\beta ;\psi
}u(t).$
\end{theorem}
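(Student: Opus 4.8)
The plan is to pass from both Cauchy-type problems to their equivalent Volterra integral equations, subtract, estimate the difference via the Lipschitz condition, and close the argument with the generalized Gronwall inequality (Lemma \ref{L2}). By Lemma \ref{AA} the solution $u$ of \eqref{11}-\eqref{12} satisfies
\[
u(t)=\frac{u_{a}}{\Gamma(\gamma)}(\psi(t)-\psi(a))^{\gamma-1}+I_{a^{+}}^{\alpha;\psi}f(t,u(t),F_{u}(t)),
\]
while the analogue of Lemma \ref{AA} with $\alpha$ replaced by $\alpha-\epsilon$ (so that $\gamma$ is replaced by $\gamma_{\epsilon}:=(\alpha-\epsilon)+\beta-(\alpha-\epsilon)\beta=\gamma+\epsilon(\beta-1)$, which is exactly the integration order appearing in \eqref{q2}) gives, for the solution $u^{\ast}$ of \eqref{q1}-\eqref{q2},
\[
u^{\ast}(t)=\frac{u_{a}^{\ast}}{\Gamma(\gamma+\epsilon(\beta-1))}(\psi(t)-\psi(a))^{\gamma+\epsilon(\beta-1)-1}+I_{a^{+}}^{\alpha-\epsilon;\psi}f(t,u^{\ast}(t),F_{u^{\ast}}(t)),
\]
with $F_{u^{\ast}}(t)=D_{a^{+}}^{\alpha-\epsilon,\beta;\psi}u^{\ast}(t)$ (existence of $u^{\ast}$ in the appropriate weighted space being guaranteed by Theorem \ref{BB} applied with order $\alpha-\epsilon$).

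Next I would subtract the two identities and apply the triangle inequality. The difference of the two power-function terms is precisely the first summand of $A(t)$ in \eqref{t4}. For the difference of the fractional-integral terms I would insert an intermediate quantity, writing
\[
I_{a^{+}}^{\alpha-\epsilon;\psi}f(\cdot,u^{\ast},F_{u^{\ast}})-I_{a^{+}}^{\alpha;\psi}f(\cdot,u,F_{u})=I_{a^{+}}^{\alpha-\epsilon;\psi}\bigl(f(\cdot,u^{\ast},F_{u^{\ast}})-f(\cdot,u,F_{u})\bigr)+\Bigl(I_{a^{+}}^{\alpha-\epsilon;\psi}-I_{a^{+}}^{\alpha;\psi}\Bigr)f(\cdot,u,F_{u}).
\]
On the second group I use $|f(\cdot,u,F_{u})|\le\|f\|$ together with Lemma \ref{r2} (which evaluates $I_{a^{+}}^{\alpha-\epsilon;\psi}1$ and $I_{a^{+}}^{\alpha;\psi}1$), inserting one further intermediate term to produce the remaining two $\|f\|$-summands of $A(t)$. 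On the first group I apply the Lipschitz condition \eqref{z} and then, exactly as in \eqref{D5}, absorb the derivative term via $|F_{u^{\ast}}(s)-F_{u}(s)|\le\frac{M}{1-M^{\ast}}|u^{\ast}(s)-u(s)|$, so that its integrand is controlled by $\frac{M}{1-M^{\ast}}|u^{\ast}(s)-u(s)|$.

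Collecting these estimates yields an integral inequality of the form
\[
|u^{\ast}(t)-u(t)|\le A(t)+\frac{M}{\Gamma(\alpha)(1-M^{\ast})}\int_{a}^{t}\psi^{\prime}(s)(\psi(t)-\psi(s))^{(\alpha-\epsilon)-1}|u^{\ast}(s)-u(s)|\,ds,
\]
to which Lemma \ref{L2} applies with the role of $\alpha$ there played by $\alpha-\epsilon$, with $v=A$ and the (constant, hence nonnegative and nondecreasing) $h=\frac{M}{\Gamma(\alpha)(1-M^{\ast})}$; since $[h\Gamma(\alpha-\epsilon)]^{k}=\bigl(\frac{M\Gamma(\alpha-\epsilon)}{\Gamma(\alpha)(1-M^{\ast})}\bigr)^{k}$, this is exactly the asserted bound. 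The step I expect to be the most delicate is the bookkeeping in the previous paragraph: choosing the intermediate terms so that the three summands of $A(t)$ — in particular the one with denominator $\Gamma(\alpha-\epsilon)\Gamma(\alpha)$ — come out verbatim, and checking that $A$, which behaves like $(\psi(t)-\psi(a))^{\gamma+\epsilon(\beta-1)-1}$ near $t=a$, is integrable there (it is, since $\gamma+\epsilon(\beta-1)>0$), so that the generalized Gronwall lemma is legitimately applicable on $(a,b]$.
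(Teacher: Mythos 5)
Your proposal follows the paper's proof essentially verbatim: pass to the two Volterra integral equations, subtract, insert intermediate terms, absorb the implicit term via $|F_{u^{\ast}}-F_{u}|\leq\frac{M}{1-M^{\ast}}|u^{\ast}-u|$ exactly as in \eqref{D5}, and finish with Lemma \ref{L2} applied with order $\alpha-\epsilon$ and constant $h$. The one point to fix is the bookkeeping you yourself flagged: your splitting $I_{a^{+}}^{\alpha-\epsilon;\psi}\bigl(f(\cdot,u^{\ast},F_{u^{\ast}})-f(\cdot,u,F_{u})\bigr)+\bigl(I_{a^{+}}^{\alpha-\epsilon;\psi}-I_{a^{+}}^{\alpha;\psi}\bigr)f(\cdot,u,F_{u})$ puts the factor $\frac{1}{\Gamma(\alpha-\epsilon)}$ in front of the Lipschitz kernel, so Gronwall would produce $\bigl(\frac{M}{1-M^{\ast}}\bigr)^{k}$ rather than the stated $\bigl(\frac{M\Gamma(\alpha-\epsilon)}{\Gamma(\alpha)(1-M^{\ast})}\bigr)^{k}$, and it is inconsistent with the coefficient $\frac{M}{\Gamma(\alpha)(1-M^{\ast})}$ you then write in the integral inequality. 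The paper instead attaches the difference of the reciprocal Gamma factors, $\frac{1}{\Gamma(\alpha-\epsilon)}-\frac{1}{\Gamma(\alpha)}$, to the $f(\cdot,u^{\ast},F_{u^{\ast}})$ term (same kernel power $\alpha-\epsilon-1$), keeps $\frac{1}{\Gamma(\alpha)}(\psi(t)-\psi(s))^{\alpha-\epsilon-1}$ with the Lipschitz difference, and puts the change of kernel power (at fixed $\frac{1}{\Gamma(\alpha)}$) on the $f(\cdot,u,F_{u})$ term; with that arrangement the three summands of $A(t)$ and the constant in the Gronwall series come out as stated. With this adjustment your argument coincides with the paper's.
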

\begin{proof}
The Cauchy-type problems \eqref{11}-\eqref{12} and \eqref{q1}-\eqref{q2}, have
integral equations which are given by
\begin{eqnarray*}
u(t) &=&\frac{u_{a}}{\Gamma (\gamma )}(\psi (t)-\psi (a))^{\gamma -1} \\
&&+\frac{1}{\Gamma (\alpha )}\int_{a}^{t}\psi ^{\prime }(s)(\psi (t)-\psi
(s))^{\alpha -1}f(s,u(s),F_{u}(s))ds,\text{ }t>a,
\end{eqnarray*}%
and
\begin{eqnarray*}
u^{\ast }(t) &=&\frac{u_{a}^{\ast }}{\Gamma (\gamma +\epsilon (\beta -1))}%
(\psi (t)-\psi (a))^{\gamma +\epsilon (\beta -1)-1} \\
&&+\frac{1}{\Gamma (\alpha -\epsilon )}\int_{a}^{t}\psi ^{\prime }(s)(\psi
(t)-\psi (s))^{\alpha -\epsilon -1}f(s,u^{\ast }(s),F_{u^{\ast }}(s))ds,%
\text{ }t>a,
\end{eqnarray*}%
respectively. It follows that%
\begin{align*}
|u^{\ast}(t)-u(t)| \leq&\bigg|\frac{u_{a}^{\ast }(\psi (t)-\psi (a))^{\gamma +\epsilon(\beta -1)-1}}{\Gamma (\gamma +\epsilon (\beta -1))}-\frac{u_{a}(\psi(t)-\psi (a))^{\gamma -1}}{\Gamma (\gamma )}\bigg| \\
&+\bigg| \frac{1}{\Gamma (\alpha -\epsilon )}\int_{a}^{t}\psi ^{\prime}(s)(\psi (t)-\psi (s))^{\alpha -\epsilon -1}f(s,u^{\ast }(s),F_{u^{\ast}}(s))ds\\
&-\frac{1}{\Gamma (\alpha )}\int_{a}^{t}\psi ^{\prime }(s)(\psi(t)-\psi (s))^{\alpha -1}f(s,u(s),F_{u}(s))ds\bigg| \\
\leq& \bigg|\frac{u_{a}^{\ast }(\psi (t)-\psi (a))^{\gamma +\epsilon(\beta -1)-1}}{\Gamma (\gamma +\epsilon (\beta -1))}-\frac{u_{a}(\psi(t)-\psi (a))^{\gamma -1}}{\Gamma (\gamma )}\bigg| \\
&+\bigg|\int_{a}^{t}\psi ^{\prime }(s)\left[ \frac{(\psi (t)-\psi(s))^{\alpha -\epsilon -1}}{\Gamma (\alpha -\epsilon )}-\frac{(\psi (t)-\psi(s))^{\alpha -\epsilon -1}}
{\Gamma (\alpha )}\right] f(s,u^{\ast}(s),F_{u^{\ast }}(s))ds\\
&+\frac{1}{\Gamma (\alpha )}\int_{a}^{t}\psi ^{\prime }(s)(\psi (t)-\psi(s))^{\alpha -\epsilon -1}\left[ f(s,u^{\ast }(s),F_{u^{\ast}}(s))-f(s,u(s),F_{u}(s))\right] ds \\
&+\int_{a}^{t}\psi ^{\prime }(s)\left[ \frac{(\psi (t)-\psi(s))^{\alpha -\epsilon -1}}{\Gamma (\alpha )}-\frac{(\psi (t)-\psi(s))^{\alpha -1}}{\Gamma (\alpha )}\right] f(s,u(s),F_{u}(s))ds\bigg|.
\end{align*}%
Since
\begin{align*}
| F_{u^{\ast }}(t)-F_{u}(t)|=&| f(t,u^{\ast}(t),F_{u^{\ast }}(t))-f(t,u(t),F_{u}(t))| \\
&\leq M|u^{\ast }(t)-u(t)|+M^{\ast }|F_{u^{\ast }}(t)-F_{u}(t)| \\
&\leq\frac{M}{1-M^{\ast }}| u^{\ast }(t)-u(t)|.
\end{align*}
Then%
\begin{eqnarray*}
\left\vert u^{\ast }(t)-u(t)\right\vert &\leq &\left\vert \frac{u_{a}^{\ast
}(\psi (t)-\psi (a))^{\gamma +\epsilon (\beta -1)-1}}{\Gamma (\gamma
+\epsilon (\beta -1))}-\frac{u_{a}(\psi (t)-\psi (a))^{\gamma -1}}{\Gamma
(\gamma )}\right\vert \\
&&+\left\Vert f\right\Vert \left\vert \frac{(\psi (t)-\psi (a))^{\alpha
-\epsilon }}{\Gamma (\alpha -\epsilon +1)}-\frac{(\psi (t)-\psi (a))^{\alpha
-\epsilon }}{\Gamma (\alpha )\Gamma (\alpha -\epsilon )}\right\vert \\
&&+\frac{M}{1-M^{\ast }}\frac{1}{\Gamma (\alpha )}\int_{a}^{t}\psi ^{\prime
}(s)(\psi (t)-\psi (s))^{\alpha -\epsilon -1}\left\vert u^{\ast
}(s)-u(s)\right\vert ds \\
&&+\left\Vert f\right\Vert \left[ \frac{(\psi (t)-\psi (s))^{\alpha
-\epsilon }}{\Gamma (\alpha )\Gamma (\alpha -\epsilon )}-\frac{(\psi
(t)-\psi (s))^{\alpha }}{\Gamma (\alpha +1)}\right] \\
&=&A(t)+\frac{M}{1-M^{\ast }}\frac{1}{\Gamma (\alpha )}\int_{a}^{t}\psi
^{\prime }(s)(\psi (t)-\psi (s))^{\alpha -\epsilon -1}\left\vert u^{\ast
}(s)-u(s)\right\vert ds,
\end{eqnarray*}%
where $A(t)$ is defined as in \eqref{t4}.By applying Lemma \ref{L2}, we
conclude that%
\begin{equation*}
|u^{\ast }(t)-u(t)|\leq A(t)+\int_{a}^{t}\left[ \sum_{k=1}^{\infty }\left( \frac{M\Gamma
(\alpha -\epsilon )}{\Gamma (\alpha )(1-M^{\ast })}\right) ^{k}\frac{\psi^{\prime }(s)(\psi (t)-\psi (s))^{k(\alpha -\epsilon )-1}}{\Gamma (k(\alpha-\epsilon ))}A(s)\right] ds.
\end{equation*}
\end{proof}
Next, we consider the following fractional differential equation
\begin{equation}
D_{a^{+}}^{\alpha ,\beta ;\psi }u(t)=f(t,u(t),D_{a^{+}}^{\alpha ,\beta ;\psi
}u(t)),\text{ \ }0<\alpha <1,0\leq \beta \leq 1,t>a.  \label{u1}
\end{equation}
with initial condition
\begin{equation}
I_{a^{+}}^{1-\gamma ;\psi }u(a)=u_{a}+\delta .\qquad \qquad \ \ \qquad \
\gamma =\alpha +\beta -\alpha \beta ,  \label{u2}
\end{equation}
\begin{theorem}
Assume that hypotheses of Theorem \ref{BB} hold. Let $u$ and $u^{\ast }$ are
solutions of the Cauchy-type problems \eqref{11}-\eqref{12} and \eqref{u1}-\eqref{u2} respectively. Then
\begin{equation}
\left\vert u(t)-u^{\ast }(t)\right\vert \leq \left\vert \delta \right\vert
(\psi (t)-\psi (a))^{\gamma -1}E_{\alpha ,\gamma }\left[ \frac{M}{(1-M^{\ast
})}(\psi (t)-\psi (a))^{\alpha }\right] ,\text{ }t\in \lbrack a,b].  \notag
\end{equation}
\end{theorem}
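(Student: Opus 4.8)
The plan is to reduce both Cauchy-type problems to their equivalent Volterra integral equations via Lemma~\ref{AA} and then apply the generalized Gronwall inequality of Lemma~\ref{L2}. By Lemma~\ref{AA}, $u$ satisfies \eqref{y} while $u^{\ast}$ satisfies the same equation with $u_{a}$ replaced by $u_{a}+\delta$; subtracting and using the triangle inequality gives
\begin{equation*}
|u(t)-u^{\ast}(t)|\le \frac{|\delta|}{\Gamma(\gamma)}(\psi(t)-\psi(a))^{\gamma-1}+\frac{1}{\Gamma(\alpha)}\int_{a}^{t}\psi^{\prime}(s)(\psi(t)-\psi(s))^{\alpha-1}\big|f(s,u(s),F_{u}(s))-f(s,u^{\ast}(s),F_{u^{\ast}}(s))\big|\,ds.
\end{equation*}

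Next I would dispose of the implicit nonlinearity exactly as in the proof of Theorem~\ref{BB} (cf. \eqref{D5}): applying the Lipschitz condition (A2) to $F_{u}-F_{u^{\ast}}=f(\cdot,u,F_{u})-f(\cdot,u^{\ast},F_{u^{\ast}})$ and absorbing the term carrying the coefficient $M^{\ast}<1$ yields $|F_{u}(s)-F_{u^{\ast}}(s)|\le \frac{M}{1-M^{\ast}}|u(s)-u^{\ast}(s)|$, whence $|f(s,u(s),F_{u}(s))-f(s,u^{\ast}(s),F_{u^{\ast}}(s))|\le \frac{M}{1-M^{\ast}}|u(s)-u^{\ast}(s)|$. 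Feeding this back in produces the scalar integral inequality
\begin{equation*}
|u(t)-u^{\ast}(t)|\le \frac{|\delta|}{\Gamma(\gamma)}(\psi(t)-\psi(a))^{\gamma-1}+\frac{M}{(1-M^{\ast})\Gamma(\alpha)}\int_{a}^{t}\psi^{\prime}(s)(\psi(t)-\psi(s))^{\alpha-1}|u(s)-u^{\ast}(s)|\,ds,
\end{equation*}
which is precisely of the form required by Lemma~\ref{L2} with $v(t)=\frac{|\delta|}{\Gamma(\gamma)}(\psi(t)-\psi(a))^{\gamma-1}$ and $h(t)\equiv \frac{M}{(1-M^{\ast})\Gamma(\alpha)}$, so that $h(t)\Gamma(\alpha)=\frac{M}{1-M^{\ast}}$.

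Since here $v$ is \emph{decreasing} (because $\gamma-1<0$), the simpler ``nondecreasing $v$'' conclusion of Lemma~\ref{L2} is not directly available, so I would invoke the series estimate \eqref{t}, which gives
\begin{equation*}
|u(t)-u^{\ast}(t)|\le v(t)+\frac{|\delta|}{\Gamma(\gamma)}\sum_{k=1}^{\infty}\left(\frac{M}{1-M^{\ast}}\right)^{k}\frac{1}{\Gamma(\alpha k)}\int_{a}^{t}\psi^{\prime}(s)(\psi(t)-\psi(s))^{\alpha k-1}(\psi(s)-\psi(a))^{\gamma-1}\,ds.
\end{equation*}
The final step is to evaluate each convolution: by Lemma~\ref{r2}, $\int_{a}^{t}\psi^{\prime}(s)(\psi(t)-\psi(s))^{\alpha k-1}(\psi(s)-\psi(a))^{\gamma-1}\,ds=\Gamma(\alpha k)\,I_{a^{+}}^{\alpha k;\psi}(\psi(t)-\psi(a))^{\gamma-1}=\frac{\Gamma(\alpha k)\Gamma(\gamma)}{\Gamma(\gamma+\alpha k)}(\psi(t)-\psi(a))^{\gamma+\alpha k-1}$.

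Substituting this in, the factors $\Gamma(\alpha k)$ and $\Gamma(\gamma)$ cancel, the $k=0$ term recombines with $v(t)$, and the right-hand side collapses to $|\delta|(\psi(t)-\psi(a))^{\gamma-1}\sum_{k=0}^{\infty}\frac{1}{\Gamma(\gamma+\alpha k)}\left[\frac{M}{1-M^{\ast}}(\psi(t)-\psi(a))^{\alpha}\right]^{k}$, which by \eqref{ml} is exactly $|\delta|(\psi(t)-\psi(a))^{\gamma-1}E_{\alpha,\gamma}\!\left[\frac{M}{1-M^{\ast}}(\psi(t)-\psi(a))^{\alpha}\right]$, as claimed. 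The only genuinely delicate point is the treatment of the implicit term, which is precisely where the hypothesis $M^{\ast}<1$ enters; the remainder is routine Gronwall-plus-Beta-function bookkeeping, together with the legitimacy of interchanging the sum and the integral in \eqref{t}, which follows from the nonnegativity of all terms involved.
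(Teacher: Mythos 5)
Your proof is correct, but it follows a genuinely different route from the paper's. You subtract the two Volterra integral equations, absorb the implicit term via $M^{\ast}<1$ to get $|F_{u}-F_{u^{\ast}}|\le\frac{M}{1-M^{\ast}}|u-u^{\ast}|$, and then apply the series form \eqref{t} of the generalized Gronwall inequality (Lemma \ref{L2}), correctly noting that the simpler Mittag--Leffler corollary is unavailable because $v(t)=\frac{|\delta|}{\Gamma(\gamma)}(\psi(t)-\psi(a))^{\gamma-1}$ is decreasing, and finishing by evaluating each convolution with Lemma \ref{r2} so the series collapses to $E_{\alpha,\gamma}$. The paper instead represents $u$ and $u^{\ast}$ as limits of the Picard iterates $u_{k}$, $u_{k}^{\ast}$ furnished by Theorem \ref{BB}, proves by explicit computation for $k=1,2$ and then induction that $|u_{k}(t)-u_{k}^{\ast}(t)|\le|\delta|(\psi(t)-\psi(a))^{\gamma-1}\sum_{i=0}^{k}\bigl(\frac{M}{1-M^{\ast}}\bigr)^{i}\frac{(\psi(t)-\psi(a))^{\alpha i}}{\Gamma(\gamma+\alpha i)}$, and passes to the limit; this builds the partial sums of the Mittag--Leffler series term by term. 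Both arguments hinge on the same absorption step using $M^{\ast}<1$. Your version is shorter, works directly with the solutions rather than with the iterates (so it does not lean on the convergence statement of Theorem \ref{BB}), and is arguably the proof the abstract promises, since it actually uses the generalized Gronwall inequality for the data-dependence result; its only extra obligation is the interchange of sum and integral in \eqref{t}, which you rightly justify by nonnegativity. The paper's inductive approach avoids that interchange but at the cost of a longer bookkeeping argument and an induction step that is only sketched.
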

\begin{proof}
In view of Theorem \ref{BB}, we have $u(t)=\underset{k\rightarrow \infty }{%
\lim }u_{k}(t)$ with%
\begin{equation}
u_{0}(t)=\frac{u_{a}}{\Gamma (\gamma )}(\psi (t)-\psi (a))^{\gamma -1}
\label{u3}
\end{equation}%
and%
\begin{align}\label{u5}
u_{k}(t)&=u_{0}(t)+I_{a^{+}}^{\alpha ;\psi }F_{u_{k-1}}(t) \nonumber \\
&=\frac{u_{a}}{\Gamma (\gamma )}(\psi (t)-\psi (a))^{\gamma -1}+\frac{1}{\Gamma (\alpha )}\int_{a}^{t}\psi ^{\prime }(s)(\psi (t)-\psi(s))^{\alpha -1}f(s,u_{k-1}(s),F_{u_{k-1}}(s))ds.
\end{align}
Clearly, we can write $u^{\ast }(t)=\underset{k\rightarrow \infty }{\lim }%
u_{k}^{\ast }(t)$ with
\begin{equation}
u_{0}^{\ast }(t)=\frac{(u_{a}+\delta )}{\Gamma (\gamma )}(\psi (t)-\psi
(a))^{\gamma -1},  \label{u4}
\end{equation}%
and%
\begin{align}\label{u7}
u_{k}^{\ast}(t)&=u_{0}^{\ast }(t)+I_{a^{+}}^{\alpha ;\psi
}F_{u_{k-1}^{\ast }}(t)  \notag \\
&=\frac{(u_{a}+\delta )}{\Gamma (\gamma )}(\psi (t)-\psi (a))^{\gamma -1}+\frac{1}{\Gamma (\alpha )}\int_{a}^{t}\psi ^{\prime }(s)(\psi (t)-\psi
(s))^{\alpha -1}f(s,u_{k-1}^{\ast }(s),F_{u_{k-1}^{\ast }}(s))ds.
\end{align}
By \eqref{u3} and \eqref{u4} we get%
\begin{equation}\label{u6}
\left\vert u_{0}(t)-u_{0}^{\ast }(t)\right\vert =\left\vert \frac{u_{a}}{\Gamma (\gamma )}(\psi (t)-\psi (a))^{\gamma -1}-\frac{(u_{a}+\delta )}{\Gamma (\gamma )}(\psi (t)-\psi (a))^{\gamma -1}\right\vert
\leq \left\vert \delta \right\vert \frac{(\psi (t)-\psi (a))^{\gamma -1}}{\Gamma (\gamma )}.
\end{equation}%
Using relations (\ref{u3}), (\ref{u5}), (\ref{u4}), (\ref{u7}), the
Lipschitz condition (\ref{z}) and the inequality (\ref{u6}), we get%
\begin{align*}
|u_{1}(t)-u_{1}^{\ast }(t)|\leq&|\delta| \frac{(\psi (t)-\psi (a))^{\gamma -1}}{\Gamma (\gamma )} \\
&+\frac{1}{\Gamma (\alpha )}\int_{a}^{t}\psi ^{\prime }(s)(\psi (t)-\psi(s))^{\alpha -1}|f(s,u_{0}(s),F_{u_{0}}(s))-f(s,u_{0}^{\ast}(s),F_{u_{0}^{\ast }}(s)|ds \\
\leq& |\delta| \frac{(\psi (t)-\psi (a))^{\gamma -1}}{\Gamma (\gamma )}+\frac{1}{\Gamma (\alpha )}\int_{a}^{t}\psi ^{\prime}(s)(\psi (t)-\psi (s))^{\alpha -1}M| u_{0}(t)-u_{0}^{\ast}(t)| ds \\
&+\frac{1}{\Gamma (\alpha )}\int_{a}^{t}\psi ^{\prime }(s)(\psi (t)-\psi(s))^{\alpha -1}M^{\ast }| F_{u_{0}}(s)-F_{u_{0}^{\ast}}(s)| ds \\
\leq& | \delta | \frac{(\psi (t)-\psi (a))^{\gamma -1}}{\Gamma (\gamma )}+\frac{M}{1-M^{\ast }}\frac{1}{\Gamma (\alpha )}\int_{a}^{t}\psi ^{\prime }(s)(\psi (t)-\psi (s))^{\alpha -1}|u_{0}(t)-u_{0}^{\ast }(t)| ds \\
\leq& |\delta | \frac{(\psi (t)-\psi (a))^{\gamma -1}}{\Gamma (\gamma )}+\frac{M|\delta |}{1-M^{\ast }}\frac{(\psi (t)-\psi (a))^{\gamma -1}}{\Gamma (\gamma )}\frac{1}{\Gamma (\alpha )}%
\int_{a}^{t}\psi ^{\prime }(s)(\psi (t)-\psi (s))^{\alpha -1}ds \\
=&|\delta| \frac{(\psi (t)-\psi (a))^{\gamma -1}}{\Gamma (\gamma )}+\frac{M|\delta|}{1-M^{\ast }}\frac{(\psi (t)-\psi (a))^{\gamma +\alpha -1}}{\Gamma (\gamma +\alpha )}.
\end{align*}
Hence,
\begin{equation}
\left\vert u_{1}(t)-u_{1}^{\ast }(t)\right\vert \leq \left\vert \delta
\right\vert (\psi (t)-\psi (a))^{\gamma -1}\sum_{i=0}^{1}\left( \frac{M}{%
1-M^{\ast }}\right) ^{i}\frac{(\psi (t)-\psi (a))^{\alpha i}}{\Gamma (\gamma
+\alpha i)}.  \label{u8}
\end{equation}%
On the other hand, we have%
\begin{align*}
|u_{2}(t)-u_{2}^{\ast}(t)|\leq&|\delta|\frac{(\psi (t)-\psi (a))^{\gamma -1}}{\Gamma (\gamma )} \\
&+\frac{1}{\Gamma (\alpha )}\int_{a}^{t}\psi ^{\prime }(s)(\psi (t)-\psi(s))^{\alpha -1}|f(s,u_{1}(s),F_{u_{1}}(s))-f(s,u_{1}^{\ast}(s),F_{u_{1}^{\ast }}(s)| ds \\
\leq&|\delta|\frac{(\psi (t)-\psi (a))^{\gamma -1}}{\Gamma (\gamma )}+\frac{1}{\Gamma (\alpha )}\int_{a}^{t}\psi ^{\prime}(s)(\psi (t)-\psi (s))^{\alpha -1}M|u_{1}(t)-u_{1}^{\ast}(t)|ds \\
&+\frac{1}{\Gamma (\alpha )}\int_{a}^{t}\psi ^{\prime }(s)(\psi (t)-\psi(s))^{\alpha -1}M^{\ast }|F_{u_{1}}(s)-F_{u_{1}^{\ast}}(s)| ds \\
\leq &|\delta |\frac{(\psi (t)-\psi (a))^{\gamma -1}}{\Gamma (\gamma )}+\frac{M}{1-M^{\ast }}\frac{1}{\Gamma (\alpha )}\int_{a}^{t}\psi ^{\prime }(s)(\psi (t)-\psi (s))^{\alpha -1}|u_{1}(t)-u_{1}^{\ast }(t)| ds \\
\leq &|\delta | \frac{(\psi (t)-\psi (a))^{\gamma -1}}{\Gamma (\gamma )}+\frac{M|\delta |}{1-M^{\ast}}\frac{1}{\Gamma (\alpha )}\int_{a}^{t}\psi ^{\prime }(s)(\psi (t)-\psi (s))^{\alpha-1}(\psi(t)-\psi(a))^{\gamma-1} \\
&\times \sum_{i=0}^{1}\left( \frac{M}{1-M^{\ast }}\right)^{i}\frac{(\psi(t)-\psi (a))^{\alpha i}}{\Gamma (\gamma +\alpha i)}ds \\
\leq &|\delta |\frac{(\psi (t)-\psi (a))^{\gamma -1}}{\Gamma (\gamma )}+\frac{M|\delta|}{1-M^{\ast }}\frac{1}{\Gamma (\alpha )}\sum_{i=0}^{1}\left( \frac{M}{1-M^{\ast }}\right) ^{i}\frac{(\psi (t)-\psi (a))^{\alpha i}}{\Gamma (\gamma +\alpha i)} \\
&\times \left( \int_{a}^{t}\psi ^{\prime }(s)(\psi (t)-\psi (s))^{\alpha-1}(\psi (t)-\psi (a))^{\gamma +\alpha i-1}ds\right) \\
\leq &|\delta | \frac{(\psi (t)-\psi (a))^{\gamma -1}}{\Gamma (\gamma )}+\frac{M|\delta|}{1-M^{\ast }}\frac{%
(\psi (t)-\psi (a))^{\gamma -1}}{\Gamma (\alpha )\Gamma (\gamma )}%
\int_{a}^{t}\psi ^{\prime }(s)(\psi (t)-\psi (s))^{\alpha -1}ds \\
&+\left( \frac{M}{1-M^{\ast }}\right) ^{2}|\delta|
\frac{(\psi (t)-\psi (a))^{\gamma +\alpha -1}}{\Gamma (\alpha )\Gamma
(\gamma +\alpha )}\int_{a}^{t}\psi ^{\prime }(s)(\psi (t)-\psi (s))^{\alpha
-1}ds \\
\leq &|\delta| \frac{(\psi (t)-\psi (a))^{\gamma -1}}{%
\Gamma (\gamma )}+\frac{M|\delta|}{1-M^{\ast }}\frac{%
(\psi (t)-\psi (a))^{\gamma -1}(\psi (t)-\psi (a))^{\alpha }}{\Gamma (\gamma
+\alpha )} \\
&+\left( \frac{M}{1-M^{\ast }}\right) ^{2}|\delta|\frac{(\psi (t)-\psi (a))^{\gamma -1}(\psi (t)-\psi (a))^{2\alpha }}{\Gamma
(\gamma +2\alpha )} \\
=&|\delta |(\psi (t)-\psi (a))^{\gamma
-1}\sum_{i=0}^{2}\left( \frac{M}{1-M^{\ast }}\right) ^{i}\frac{(\psi
(t)-\psi (a))^{\alpha i}}{\Gamma (\gamma +\alpha i)}.
\end{align*}
Using the mathematical induction, we get%
\begin{equation}
\left\vert u_{k}(t)-u_{k}^{\ast }(t)\right\vert \leq \left\vert \delta
\right\vert (\psi (t)-\psi (a))^{\gamma -1}\sum_{i=0}^{k}\left( \frac{M}{%
1-M^{\ast }}\right) ^{i}\frac{(\psi (t)-\psi (a))^{\alpha i}}{\Gamma (\gamma
+\alpha i)}.  \label{u9}
\end{equation}%
Taking the limit $m\rightarrow \infty $ in inequation \eqref{u9}, we obtain
\begin{equation*}
\left\vert u_{k}(t)-u_{k}^{\ast }(t)\right\vert \leq \left\vert \delta
\right\vert (\psi (t)-\psi (a))^{\gamma -1}E_{\gamma ,\alpha }\left( \frac{M%
}{1-M^{\ast }}(\psi (t)-\psi (a))^{\alpha }\right) .
\end{equation*}
\end{proof}
\section{An example.}\label{7}

\label{nn} Fix a kernel function $\psi :[0,1]\rightarrow
\mathbb{R}
$ such that $\psi (t)=\sqrt{t+1}$. Let $\alpha =\frac{1}{2},\beta =\frac{1}{3%
}$ and $\gamma =\frac{2}{3}$. Consider the following implicit Cauchy-type problem:
\begin{equation}
D_{0^{+}}^{\frac{1}{2},\frac{1}{3},\psi }u(t)=f(t,u(t),D_{0^{+}}^{\frac{1}{2}%
,\frac{1}{3},\psi }u(t)),\text{ \ },t>0,  \label{3}
\end{equation}%
\begin{equation}
I_{0^{+}}^{\frac{1}{3},\psi }u(0)=u_{0}.\qquad u_0\in\mathbb{R},   \label{4}
\end{equation}%
where
\begin{equation*}
f(t,u(t),D_{0^{+}}^{\frac{1}{2},\frac{1}{3},\psi }u(t))=\frac{1}{(1+9e^{t})\left( 1+\left\vert u(t)\right\vert+\left\vert D_{0^{+}}^{\frac{1}{2},\frac{1}{3},\psi }u(t)\right\vert \right)}.
\end{equation*}

It is easy to see that $f(t,u(t),D_{0^{+}}^{\frac{1}{2},\frac{1}{3},\psi
}u(t)\in C_{1-\frac{2}{3},\psi }[0,1].$ Moreover, for any $x,y,x^{\ast
},y^{\ast }\in \mathcal{\mathbb{R^{+}}}$ and $t\in (0,1],$ we have
\begin{align*}
| f(t,x,y)-f(t,x^{\ast},&y^{\ast})|_{C_{1-\frac{2}{3},\psi }[0,1]}=\max_{t\in[0,1]}[\psi (t)-\psi (0)]^{\frac{1}{3}}|f(t,x,y)-f(t,x^{\ast },y^{\ast })| \\
&=\max_{t\in[0,1]}[\psi (t)-\psi (0)]^{\frac{1}{3}%
}\bigg|\frac{1}{(1+9e^{t})(1+| x| +|y|)}-\frac{1}{(1+9e^{t})(1+|x^{\ast }|+|y^{\ast }|)}\bigg| \\
&\leq \frac{1}{10}\max_{t\in[0,1]}[\psi (t)-\psi (0)]^{%
\frac{1}{3}}\bigg|\frac{| x^{\ast }|+|y^{\ast }| -| x| -|y|}{(1+| x| +|y|)(1+| x^{\ast}| +|y^{\ast }|)}\bigg| \\
&\leq \frac{1}{10}\max_{t\in[0,1]}[\psi (t)-\psi (0)]^{%
\frac{1}{3}}[|x-x^{\ast }| +|y-y^{\ast}|]\\
&=\frac{1}{10}|x-x^{\ast }|_{C_{1-\frac{2}{3},\psi
}[0,1]}+\frac{1}{10}|y-y^{\ast }|_{C_{1-\frac{2}{3}%
,\psi }[0,1]}.
\end{align*}
Therefore, all the conditions of Theorem \ref{BB} are satisfied with $%
M=M^{\ast }=\frac{1}{10}$. It is easy to check that the condition (\ref{t1})
holds, i.e.
\begin{equation*}
\left(\frac{\Gamma (\gamma )\left[ (\psi (t)-\psi (a)\right]^{\alpha }}{\Gamma (\gamma +\alpha )}\frac{M}{1-M^{\ast }}\right) =\left(\frac{\Gamma (\frac{2}{3})\left[\sqrt{(t+1}-1\right]^{\frac{1}{2}}}{\Gamma(\frac{2}{3}+\frac{1}{2})}\frac{\frac{1}{10}}{1-\frac{1}{10}}\right)<1
\end{equation*}
for all $t\in \lbrack 0,1].$ Now we can apply Theorem \ref{BB} and
conclude that Cauchy-type problem (\ref{3})-(\ref{4}) has a unique solution in $_{C_{%
\frac{1}{3},\sqrt{t+1}}[0,1]}.$

\end{document}